\documentclass[11pt, reqno]{amsart}
\usepackage[pagewise]{lineno}
\usepackage{amsmath, amsthm, amscd, amsfonts, amssymb, graphicx, color}
\usepackage[bookmarksnumbered,
colorlinks, plainpages,
]{hyperref}
\hypersetup{colorlinks=true,linkcolor=red, anchorcolor=green, citecolor=cyan,  urlcolor=red, filecolor=magenta, pdftoolbar=true}

\usepackage{graphicx}
\usepackage{amssymb}
\usepackage{amsmath}
\usepackage{amsfonts}
\usepackage{amsthm,latexsym,color}

\usepackage{enumitem}
\setlist[enumerate,1]{label=(\arabic*)}

\usepackage{pifont}
\usepackage{bbm}
\usepackage{verbatim}
\usepackage{amsfonts}
\usepackage{hyperref}
\usepackage{graphicx}
\usepackage{tikz}
\usepackage{placeins}
\renewcommand{\theequation}{\thesection.\arabic{equation}}

\font\TenEns=msbm10 \font\SevenEns=msbm7 \font\FiveEns=msbm5
\newfam\Ensfam

\textfont\Ensfam=\TenEns \scriptfont\Ensfam=\SevenEns
\scriptscriptfont\Ensfam=\FiveEns
\def\R{{\Ens R}}

\def\Z {\mathbb Z}
\def\R {\mathbb R}
\def\C {\mathbb C}

\theoremstyle{plain}

\DeclareMathSymbol{\subsetneqq}{\mathbin}{AMSb}{36}
\newcommand{\beq}{\begin{eqnarray}}
\newcommand{\eeq}{\end{eqnarray}}
\newcommand{\bq}{\begin{equation}}
\newcommand{\eq}{\end{equation}}
\newcommand{\beqn}{\begin{eqnarray*}}
\newcommand{\eeqn}{\end{eqnarray*}}
\newcommand{\bex}{\begin{exo}}
\newcommand{\eex}{\end{exo}}
\newcommand{\ben}{\begin{enumerate}}
\newcommand{\een}{\end{enumerate}}

\newtheorem{th1}{{\bf Theorem}}[section]
\newtheorem{thm}[th1]{{\bf Theorem}}
\newtheorem{lem}[th1]{{\bf Lemma}}
\newtheorem{prop}[th1]{{\bf Proposition}}
\newtheorem{cor}[th1]{{\bf Corollary}}
\newtheorem{rem}[th1]{\bf Remark}

\newtheorem{defi}[th1]{\bf Definition}

\theoremstyle{definition}
\newtheorem*{assumption}{Assumption}
\theoremstyle{plain}

\title[Inhomogeneous nonlinear Schr\"odinger equation]{
Threshold for the existence of scattering states for
inhomogeneous nonlinear Schr\"odinger equations without gauge invariance}
\author[M. GRIRA, H. MIYAZAKI and S. TAYACHI]{}
\email{mourad.grira@fst.utm.tn}
\email{miyazaki.hayato@kagawa-u.ac.jp}
\email{slim.tayachi@fst.rnu.tn}
\subjclass[2020]{Primary: 35Q55. Secondary: 35P25, 35B40}
\keywords{Inhomogeneous nonlinear Schr\"odinger equation, non-gauge invariance, Lorentz spaces, scattering, non-exsitence of scattering states.}
\begin{document}
\maketitle
\centerline{\scshape Mourad Grira}

\medskip

{\footnotesize

  \centerline{Universit\'e de Tunis El Manar, Facult\'e des Sciences de Tunis,}  \centerline{ D\'epartement de

Math\'ematiques, Laboratoire  \'Equations aux} \centerline{ D\'eriv\'ees

Partielles LR03ES04,  2092 Tunis,

Tunisia}

}
\medskip

\centerline{\scshape Hayato Miyazaki}

\medskip

{\footnotesize

  \centerline{Teacher Training Courses, Faculty of Education,} \centerline{ Kagawa University, Takamatsu,
Kagawa 760-8522, Japan,}

}

\medskip

\centerline{\scshape Slim Tayachi}

\medskip

{\footnotesize

 \centerline{Universit\'e de Tunis El Manar, Facult\'e des Sciences de Tunis,}  \centerline{ D\'epartement de

Math\'ematiques, Laboratoire  \'Equations aux} \centerline{ D\'eriv\'ees

Partielles LR03ES04,  2092 Tunis,

Tunisia}

}

\bigskip
\begin{abstract}
We consider the asymptotic behavior of solutions to inhomogeneous nonlinear Schr\"odinger equations with non-gauge-invariant nonlinearities. The spatial coefficient in the nonlinear term may have different orders of singularity at the origin and decay at infinity.
Under a coercivity condition involving the coefficient and the nonlinearity, we show that no scattering states exist for the equation below a Strauss-type threshold determined by the decay at infinity.
Our class includes nonlinearities with a dominant non-oscillatory component.
We also prove a complementary small-data scattering result above this threshold, using non-admissible Strichartz estimates in Lorentz spaces adapted to the singular coefficient.
\end{abstract}

\renewcommand{\theequation}{\thesection.\arabic{equation}}
\section{Introduction}
\setcounter{equation}{0}

In this paper we consider the Cauchy problem for the inhomogeneous nonlinear Schr\"odinger equation
\begin{align}
\begin{cases}
i\partial_t u+\dfrac12\Delta u=K(x)F(u), \quad (t,x)\in\R\times\R^N,\\
u(0,x)=u_0(x), \quad x\in\R^N,
\end{cases}
\label{11.1} \tag{INLS}
\end{align}
where $N\ge1$, $u = u(t,x)$ is a complex-valued unknown function, and $K = K(x)$ is a complex-valued spatial coefficient.  We allow $K$ to have different orders of behavior at the origin and at infinity.  More precisely, we assume
\begin{equation}
  0\le b_0<\min(2,N),\qquad b_0\le b_\infty < \infty,
  \label{b0binfty}
\end{equation}
and impose the two-scale upper bound
\begin{equation}\label{Kup}
 |K(x)|\le C_K |x|^{-b_0}\langle x\rangle^{-(b_\infty-b_0)},
 \qquad x\ne0,
\end{equation}
for some constant $C_K>0$,
where $\langle x\rangle=(1+|x|^2)^{1/2}$.  The model case is
\[
K(x)=\mu |x|^{-b},\qquad b_0=b_\infty=b,
\]
with $\mu\in\C\setminus\{0\}$.  The exponent $b_0$ governs the singularity of the coefficient at the origin, while $b_\infty$ governs its decay at spatial infinity.

Our purpose is twofold.  First, under a coercivity condition on the product $K(x)F(z)$, we prove the non-existence of scattering states
for \eqref{11.1} below a Strauss-type threshold determined by the decay exponent $b_\infty$.  Second, above the corresponding threshold, we prove a small-data scattering result by using non-admissible Strichartz estimates in Lorentz spaces.

The assumptions on the nonlinearity and on the sign of the nonlinear term are as follows.

\begin{assumption}
Let $p>1$.  We assume:
\begin{enumerate}[label=(A\arabic*), ref=A\arabic*, itemsep=2mm]
\item \label{A1} $F(0)=0$, and there exists $C>0$ such that
\[
  |F(z)-F(w)|\le C(|z|+|w|)^{p-1}|z-w|, \qquad z,w\in\C.
\]
\item \label{A2} There exist $\varphi\in\R$, $c_0>0$, and a nonnegative measurable function $\kappa$ such that
\begin{equation}\label{KFpositive}
 \operatorname{Re}\bigl(e^{i\varphi}K(x)F(z)\bigr)\ge c_0\kappa(x)|z|^p
\end{equation}
for a.e. $x\in\R^N$ and all $z\in\C$, and
\begin{equation}\label{Klower}
 \kappa(x)\ge c_1 |x|^{-b_0}\langle x\rangle^{-(b_\infty-b_0)},
 \qquad x\ne0,
\end{equation}
for some $c_1>0$.
\end{enumerate}
\end{assumption}

Assumption \eqref{A1} is used in the local theory and in the small-data scattering argument.  Assumption~\eqref{A2} is
a coercivity condition on the product $K(x)F(z)$,
used only in the non-scattering argument.
The positivity condition \eqref{KFpositive} is needed in a unified test-function argument, while the lower bound \eqref{Klower} determines the effective decay at spatial infinity and hence the threshold for nonexistence of scattering states.

We write
$U(t)=e^{\frac{it}{2}\Delta}$
for the free Schr\"odinger group associated with the linear part of \eqref{11.1}.  For $s \ge 0$, the standard Sobolev space on $\R^N$ is defined by
$H^s=H^s(\R^N)
 :=\{f\in L^2(\R^N): \|f\|_{H^s}=\|\langle \nabla\rangle^s f\|_{L^2(\R^N)}<\infty\}.
$
We also use the weighted $L^{2}$ space
\[
 \mathcal{F}H^s=\mathcal{F}H^s(\R^N)
 :=\{f\in L^2(\R^N):\langle x\rangle^s f\in L^2(\R^N)\}
\]
with norm $\|f\|_{\mathcal{F}H^s}:=\|\langle x\rangle^s f\|_{L^2}$.

Before stating the main results, we state the notion of solution.  For general background on semilinear Schr\"odinger equations and their integral formulation, see \cite{Cazenavel}.  When \eqref{Kup} and \eqref{A1} are imposed, the local $L^2$ theory is governed by the singularity exponent $b_0$; for the corresponding local theory in the model inhomogeneous case, see \cite{tayachi1}.  We set
\[
 q_0=\frac{4(p+1)}{N(p-1)+2b_0},
 \qquad
 r_{0}=\frac{N(p+1)}{N-b_0}.
\]

\begin{defi}[Solution]\label{def:solution}
Let $I$ be an interval with $0\in I$, and let $u_0\in L^2$.  We say that $u$ is a solution to \eqref{11.1} on $I$ with initial data $u_0$ if
\[
 u\in C(I;L^2)\cap L^{q_0}_{\rm loc}(I;L^{r_0,2})
\]
and
\[
 u(t)=U(t)u_0-i\int_0^t U(t-s)K F(u(s))\,ds
\]
holds in $L^2$ for every $t\in I$.
\end{defi}

\subsection{Main results}

For $0\le \beta\le2$, we define the Strauss-type exponent
\begin{equation*}
 p_{\mathrm{st}}(\beta)
 =1+
 \frac{-(N-2+2\beta)+\sqrt{(N-2+2\beta)^2+4N(4-2\beta)}}{2N}.
\end{equation*}
Equivalently, $p_{\mathrm{st}}(\beta)$ is the positive root, in $p$, of
\[
 Np^2-(N+2-2\beta)p-2=0.
\]
When $\beta=0$, this coincides with the usual Strauss exponent.
We also write
\[
 p_0(\beta)=1+\frac{4-2\beta}{N}.
\]
For $0\le\beta<\min(2,N)$, we have
\[
 1+\frac{2-\beta}{N}<p_{\mathrm{st}}(\beta)<p_0(\beta).
\]
At the formal endpoint $\beta=2$, one has $p_{\mathrm{st}}(2)=p_0(2)=1$.
In particular, for $N\ge2$, $p_{\mathrm{st}}(\beta)$ decreases to $1$ as
$\beta\uparrow2$, and the interval $1<p<p_{\mathrm{st}}(\beta)$ degenerates in this limit.

Our first result is the nonexistence of non-trivial scattering states for \eqref{11.1} below the Strauss-type threshold determined by the decay of $K$ at infinity.

\begin{thm}[Non-existence of scattering states for general potentials] \label{but}
Let $0\le b_0\le b_\infty<\min(2,N)$, and
$1<p<p_{\mathrm{st}}(b_\infty)$.
Assume \eqref{Kup}, \eqref{A1}, and \eqref{A2}.  Let $\alpha>0$ satisfy
\[
\frac{2-b_\infty}{p - 1} - \frac{N}{2} \leq \alpha.
\]
Given $u_0\in \mathcal{F}H^\alpha$, let $u$ be a solution to \eqref{11.1} on $[0,\infty)$ with initial data $u_0$.  The following statements hold:
\begin{itemize}
    \item[(i)] If $1<p\le2$ and
    \[
    \lim_{t\to\infty}\|u-U(\cdot)u_+\|_{L^\infty(t,\infty;L^2)}=0
    \]
    for some $u_+\in L^2$, then $u_+\equiv0$.

    \item[(ii)] If $2<p<p_{\mathrm{st}}(b_\infty)$ and
    \[
    \lim_{t\to\infty}
    t^{\frac{N(p-2)}{4p}}
    \|u-U(\cdot)u_+\|_{L^{\frac{4p}{N(p-2)}}(t,\infty;L^p)}=0
    \]
    for some $u_+\in L^2\cap L^{\frac{p}{p-1}}$, then $u_+\equiv0$.
\end{itemize}
The analogous statements hold backward in time.
\end{thm}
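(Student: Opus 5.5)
We argue by contradiction with a test-function argument tailored to the coercivity \eqref{KFpositive}, in the spirit of the non-existence results for non-gauge-invariant NLS (Ikeda--Inui, Ikeda--Wakasugi type). Suppose $u_+\not\equiv0$. Fix a smooth cutoff $\eta\in C_0^\infty(\R)$ with $\eta=1$ near $0$, and for large $T$ set
\[
\psi_T(t,x)=\eta\Bigl(\frac{t}{T}\Bigr)^{\ell}\,\bigl(U(t)\phi_T\bigr)(x),
\]
with $\ell$ large and $\phi_T$ a profile adapted to $u_+$. The simplest choice is a function that the free flow maps at time $\sim T$ onto a scaled copy of $u_+$; equivalently, we test against $\overline{U(t)u_+}$ itself, localized to $|x|\lesssim t$.

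Pairing the Duhamel formula of Definition~\ref{def:solution} with $e^{i\varphi}\psi_T$ and integrating over $[0,2T]$ gives an identity. Its left side contains $\operatorname{Re}\int\!\!\int e^{i\varphi}K F(u)\,\overline{\psi}\,dx\,dt$. Its right side contains only linear terms: $(u_0,\phi_T)$ and a term in $\partial_t\eta$ that pairs $u$ with $U(t)\phi_T$ on $T\le t\le 2T$.

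Next we replace $u$ by $U(t)u_+$ in the nonlinear term. In case (i) we use $L^\infty L^2$ convergence together with \eqref{A1}. In case (ii) we use the weighted $L^{4p/(N(p-2))}L^p$ decay, since for $p>2$ the $L^2$ convergence alone does not control $|u|^{p-1}$ in the needed norm. This is why $u_+\in L^{p/(p-1)}$ is required there: it gives the $t^{-N(1/2-1/p)}$ decay of $U(t)u_+$ in $L^p$.

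By the Dollard decomposition $U(t)u_+(x)\approx (it)^{-N/2}e^{i|x|^2/2t}\hat u_+(x/t)$, coercivity \eqref{KFpositive} and the lower bound \eqref{Klower} give
\[
\operatorname{Re}\!\int\!\!\int e^{i\varphi}KF(U(t)u_+)\,\omega\gtrsim \int_1^T \!\!\int_{|x|\sim t}|x|^{-b_\infty}|U(t)u_+|^p\,dx\,dt\gtrsim \int_1^T t^{-b_\infty-\frac{N(p-2)}{2}}\,dt\cdot\|\hat u_+\|_{L^p(A)}^p .
\]
Here we take a nonnegative weight $\omega$ rather than an oscillating $\psi$; this is cleaner because the Re-positivity holds pointwise. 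So we choose the multiplier $|U(t)u_+|^{\,\cdot}$-free form $\psi=\eta_T(t)\chi(x/t)$ and exploit that $\partial_t\psi+\frac{i}{2}\Delta\psi$ contributes only $O(T^{-1})$ terms.

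The contradiction then comes from comparing sizes. The linear/boundary terms are bounded, using Hölder, $\|u\|_{L^2}$ and the $\mathcal{F}H^\alpha$ weight on $u_0$, by $C T^{N/2-(2-b_\infty)/(p-1)}$ or by a power of $T$ whose exponent is controlled by the condition on $\alpha$. After inserting the Dollard amplitude, the nonlinear term grows like $T^{1-b_\infty-N(p-2)/2}\cdot T^{\cdots}$. Balancing exponents yields exactly the quadratic inequality $Np^2-(N+2-2b_\infty)p-2<0$, i.e.\ $p<p_{\mathrm{st}}(b_\infty)$.

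We expect the main obstacle to be the exponent bookkeeping. The error from replacing $u$ by $U(t)u_+$, namely $\|u-U u_+\|$ times $\|U u_+\|^{p-1}$ against the weight $|x|^{-b_0}\langle x\rangle^{b_0-b_\infty}$, must be $o$ of the main term. The singular part near $x=0$ has to be handled separately using $b_0<\min(2,N)$ and local integrability. We would first try to split into $|x|\le1$ and $|x|\ge1$, and treat $|x|\le1$ with Hölder in Lorentz spaces, which is exactly where the hypothesis on $b_0$ enters. The backward-in-time statement follows by time reversal $u(t,x)\mapsto\overline{u(-t,x)}$ with $F$ replaced accordingly.
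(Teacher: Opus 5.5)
Your proposal has a genuine gap in the upper bound, which is where the Strauss-type exponent comes from. You rightly drop the oscillatory test function, since \eqref{KFpositive} says nothing about $\operatorname{Re}(e^{i\varphi}KF(u)\overline{\psi})$ for complex $\psi$. You then settle on the hyperbolically scaled weight $\eta(t/T)\chi(x/t)$ and bound the linear terms by H\"older and $\|u(t)\|_{L^2}$. This fails for three reasons. First, $\partial_t[\chi(x/t)]=-t^{-2}x\cdot\nabla\chi(x/t)$ has size $t^{-1}$ on $|x|\sim t$, not $O(T^{-1})$, and $\chi(x/t)$ is not an admissible test function at $t=0$. Second, in case (ii) no uniform bound on $\|u(t)\|_{L^2}$ is available: the mass is not conserved, and (ii) controls $u-U(t)u_+$ only in $L^{\frac{4p}{N(p-2)}}_tL^p_x$. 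Third, even in case (i) the linear term is then of size $\int_1^T t^{-1}t^{N/2}\,dt\sim T^{N/2}$. This beats the lower bound $T^{N+1-b_\infty-\frac N2p}$ only when $p<1+(2-2b_\infty)/N$, a range strictly smaller than $(1,p_{\mathrm{st}}(b_\infty))$ and empty for $b_\infty\ge1$. The same threshold appears if you keep the hyperbolic weight but absorb via Young (the upper bound is then $T^{N-\frac{1-b_\infty}{p-1}}$), or if you use a parabolic weight but still estimate through $\|u\|_{L^2}$. The exponents you write, $T^{N/2-(2-b_\infty)/(p-1)}$ and $T^{1-b_\infty-N(p-2)/2}\cdot T^{\cdots}$, do not follow from the ingredients you list. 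So the claim that balancing gives $Np^2-(N+2-2b_\infty)p-2<0$ is asserted, not derived.

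The missing ideas are as follows. Use the parabolic cutoff $\psi_R=\eta((|x|^2+t)/R^2)^{2p'}$, for which $|\partial_t\psi_R|+|\Delta\psi_R|\lesssim R^{-2}\psi_R^{1/p}$. Estimate the linear term by the nonlinear quantity itself, not by $L^2$ norms. H\"older with the weight $\kappa^{-1/(p-1)}$ gives $|\iint u(-i\partial_t\psi_R+\frac12\Delta\psi_R)|\lesssim R^{N-\frac{N+2-b_\infty}{p}}I_\kappa(R)^{1/p}$, where $I_\kappa(R)=\iint\kappa|u|^p\psi_R$. After \eqref{KFpositive} and Young's inequality this yields $I_\kappa(R)\lesssim R^{N-\frac{2-b_\infty}{p-1}}+R^{\frac{N-2\alpha}{2}}$, with $\log R$ or $1$ replacing the last term when $\alpha\ge N/2$. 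The initial-data term is controlled precisely by $\alpha\ge\frac{2-b_\infty}{p-1}-\frac N2$, and this bound uses no scattering information.

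The scattering state is used only on the much shorter time scale $R/(4\rho_\ast)\le t\le R/(2\rho_\ast)$, $|x|\le\rho_\ast t$. There $\psi_R\equiv1$ and $\kappa\gtrsim R^{-b_\infty}$, giving $I_\kappa(R)\gtrsim R^{N+1-b_\infty-\frac N2p}$. The mismatch between the parabolic support of $\psi_R$ and the region $|x|\lesssim t$ where $U(t)u_+$ lives is what produces $p_{\mathrm{st}}(b_\infty)$.

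Finally, you never need to replace $u$ by $U(t)u_+$ inside $F$. Coercivity already gives $c_0\kappa|u|^p$ pointwise, and the lower bound on $\iint_D\kappa|u|^p$ follows from the triangle inequality in $L^p(D)$ together with (i) or (ii). So the error term $(|u|+|U(t)u_+|)^{p-1}|u-U(t)u_+|$ and the Lorentz-space treatment near $x=0$, which you single out as the main obstacle, never arise. Near the origin $\kappa^{-1/(p-1)}$ carries the nonnegative power $|x|^{b_0/(p-1)}$, and $\kappa\gtrsim R^{-b_\infty}$ holds on all of $|x|\le R$.
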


\begin{rem}
Let $2<p<p_{\mathrm{st}}(b_\infty)$ and set
\[
 q_{\ast} = \frac{4p}{N(p-2)}.
\]
The factor $t^{1/q_{\ast}}$ in Theorem \ref{but} (ii) corresponds to the linear decay order in the same spacetime norm.
The additional assumption $u_+\in L^{\frac{p}{p-1}}$ ensures that by the Hausdorff--Young inequality, the estimate
\[
  0 \neq \| \widehat{u_+} \|_{L^p} \lesssim t^{1/q_{\ast}} \| U(\cdot) u_+ \|_{L^{q_{\ast}}(t, \infty; L^{p})} \lesssim \| \widehat{u_+} \|_{L^{p}},
\]
holds for large $t$.
The weighted condition $u_+\in \mathcal{F}H^{ \sigma}$, $ \sigma> N (1/2 - 1/p)$, is a sufficient condition for $u_+\in L^2\cap L^{\frac{p}{p-1}}$.
Thus the condition in Theorem \ref{but} (ii) is the natural order needed to determine the scattering state $u_{+}$.
If one replaces $t^{1/q_{\ast}}$ by $t^\gamma$ with $\gamma<1/q_{\ast}$, then the scattering state is no longer uniquely determined by such a condition.

We also note that Theorem \ref{but} (ii) is intentionally stated in $L^p$, not in $L^{p,2}$: the proof only requires that the $L^{q_{\ast}}_tL^p_x$ norm be small on $(T,\infty)$ for sufficiently large $T$, and an $L^{p,2}$ assumption would be stronger than necessary.
\end{rem}

\begin{rem}
Theorem \ref{but} (ii) is relevant only when
$p_{\mathrm{st}}(b_\infty)>2$.  We note that $p_{\mathrm{st}}(\beta)<2$ is equivalent to
\[
3-N-2\beta<0.
\]
Thus, under $\beta<\min(2,N)$, $p_{\mathrm{st}}(\beta)<2$ for $N\ge4$, for $N=3$ with $\beta>0$, and for $N=2$ with $\beta>1/2$.  Moreover, $p_{\mathrm{st}}(\beta)=2$ when $(N,\beta)=(3,0)$, $(N,\beta)=(2,1/2)$, while $p_{\mathrm{st}}(\beta)>2$ for $N=1$ and for $N=2$ with $0\le\beta<1/2$.
\end{rem}

\begin{rem}[Role of the weighted assumption]\label{rem:role-weighted-assumption}
Set
\[
 s_\infty:=\frac{2-b_\infty}{p-1}-\frac N2.
\]
The condition $s_\infty\le\alpha$ in Theorem \ref{but} is not a
regularity assumption for the local theory.
It is used only to control the term involving the initial data in the test-function argument; see Lemma
\ref{lem:initial-data-term}.
The exponent $s_\infty$ is the scaling-critical exponent in the weighted
scale determined by the decay of the coefficient at spatial infinity.
In the model case $K(x)=|x|^{-b}$, it
reduces to
\[
 s_c=\frac{2-b}{p-1}-\frac N2.
\]
This is the critical exponent for the homogeneous weighted scale associated with
$\mathcal{F}H^{s_c}$.
Thus Theorem \ref{but} covers the
scale-critical and supercritical range in the weighted scale determined by the behavior of $K$ at infinity.
In particular, compactly supported smooth initial data are
included in the class allowed by Theorem \ref{but}.
\end{rem}

\begin{rem}\label{rem:weak-formulation-only}
The Lipschitz-type condition in \eqref{A1} is not essential for proving
the nonexistence of scattering states.
The proof of Theorem \ref{but} only uses the weak formulation of the
equation together with the coercivity condition in \eqref{A2}.
As noted in Remark~\ref{rem:role-weighted-assumption}, the weighted assumption $u_0\in\mathcal{F}H^\alpha$ is used only to control the term involving the initial data in Lemma \ref{lem:initial-data-term}.
In particular, the proof does not require persistence of regularity in $\mathcal{F}H^\alpha$.
We keep \eqref{A1} in the main assumptions because it is needed for the local theory and for the small-data scattering result.
\end{rem}

A distinctive feature of Theorem \ref{but} is that the threshold is governed by the decay of the coefficient at spatial infinity, not by its singular behavior near the origin.  This is because the test-function estimate uses \eqref{Klower} through the behavior of $K$ as $|x|\to\infty$.
In the non-gauge-invariant case, the $L^2$-norm is not conserved in general.  Thus the global existence theory differs from the gauge-invariant case, while Theorem \ref{but} concerns the non-scattering mechanism.

The second result establishes the complementary small-data scattering above the same Strauss-type threshold.  The proof follows the argument of Aloui and the third author~\cite{tayachi2}. The smallness condition is imposed on the free evolution, following the approach of Cazenave and Weissler \cite{Cazenavewissler} as well as Kato's framework for nonlinear Schr\"odinger equations \cite{Kato94}. Smallness in $\mathcal{F}H^1$ is one sufficient condition for this assumption.
For this result,
under \eqref{b0binfty},
we impose the following compatibility condition on the parameters $b_0$,
$b_\infty$, and $p$:
\begin{equation}\label{scatter-compatibility}
 p_{\mathrm{st}}(b_{\ast})<p\le p_0(b_0),
 \qquad b_{\ast}:=\min\{2,N,b_\infty\}.
\end{equation}
This condition means that the two-scale bound \eqref{Kup} allows us to
choose at least one single-power exponent $b$ in the range required for
the small-data scattering argument.  More precisely, given
\eqref{scatter-compatibility}, one can choose $b$ so that
\begin{equation}\label{bchoice}
 b_0\le b\le b_\infty,
 \qquad
 0\le b< \min(2,N),
 \qquad
 p_{\mathrm{st}}(b)<p\le p_0(b).
\end{equation}
Then the bound \eqref{Kup} implies
\[
 |K(x)|\lesssim |x|^{-b},
 \qquad x\ne0.
\]
The equivalence between \eqref{scatter-compatibility} and the existence
of $b$ satisfying \eqref{bchoice} is explained in
Remark~\ref{rem:choice-of-b}.

\begin{thm}[Scattering for small free evolution]\label{thm:small-s}
Assume \eqref{Kup}, \eqref{A1}, and \eqref{scatter-compatibility}.
Let $b$ satisfy \eqref{bchoice}.
Set
\begin{equation}\label{main-scatter-exponents}
 q_b=\frac{4(p+1)}{N(p-1)+2b},
 \qquad
 r_b=\frac{N(p+1)}{N-b},
 \qquad
 a_b=\frac{2(p-1)(p+1)}{4-2b-(N-2)(p-1)}.
\end{equation}
Then there exists $\varepsilon_0>0$ with the following property.  If $u_0\in L^2$ satisfies
\[
 \|U(t)u_0\|_{L^{a_b}(0,\infty;L^{r_{b},\infty})}\le \varepsilon_0,
\]
then the corresponding solution to \eqref{11.1} is global forward in time and scatters in $L^2$ as $t\to\infty$.  More precisely, there exists $u_+\in L^2$ such that
\[
 \lim_{t\to\infty}\|u(t)-U(t)u_+\|_{L^2}=0.
\]
The analogous statement holds backward in time.  In particular, if
\[
 \|U(t)u_0\|_{L^{a_b}(\R;L^{r_b,\infty})}\le \varepsilon_0,
\]
then the solution is global and scatters in $L^2$ as $t\to\pm\infty$.
\end{thm}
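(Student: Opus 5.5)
We would run a contraction argument in a non-admissible Strichartz space built from the Lorentz norm $L^{r_b,\infty}$. Because $b$ satisfies \eqref{bchoice}, the bound \eqref{Kup} gives $|K(x)|\lesssim |x|^{-b}$, and $|x|^{-b}\in L^{N/b,\infty}$. Working with the single power $b$, we set
\[
X=L^{a_b}(0,\infty;L^{r_b,\infty}),\qquad \Phi(u)(t)=U(t)u_0-i\int_0^tU(t-s)KF(u(s))\,ds .
\]
The key nonlinear estimate follows from H\"older's inequality in Lorentz spaces together with \eqref{A1}:
\[
\|KF(u)\|_{L^{\rho,\cdot}}\lesssim \||x|^{-b}\|_{L^{N/b,\infty}}\|u\|_{L^{r_b,\infty}}^p,\qquad \frac1\rho=\frac bN+\frac{p}{r_b}=\frac{b}{N}+\frac{p(N-b)}{N(p+1)} .
\]
One checks that $\rho\ge1$ and that $\rho$ lies in the range where the dispersive estimate applies. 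In time, $\|KF(u)\|_{L^{a_b/p}_tL^{\rho,\cdot}_x}\lesssim\|u\|_X^p$.

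The central step is an inhomogeneous non-admissible Strichartz estimate in Lorentz spaces:
\[
\Bigl\|\int_0^tU(t-s)G(s)\,ds\Bigr\|_{L^{a_b}_tL^{r_b,\infty}}\lesssim \|G\|_{L^{a_b/p}_tL^{\rho',\cdot}} ,
\]
with the exponents taken as in \eqref{main-scatter-exponents}. To prove it, interpolate the dispersive bound $|t-s|^{-N(1/2-1/r)}$ between $L^{r'}$ and $L^r$ in Lorentz spaces (real interpolation), and then apply the Hardy--Littlewood--Sobolev inequality in time. The numerology should be exactly what fixes $a_b$: the time exponent $N(1/2-1/r_b)+\dots$ must lie in $(0,1)$, and the scaling relation $1/a_b+1=p/a_b+\gamma$ must hold. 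The condition $p>p_{\mathrm{st}}(b)$ should be precisely what makes $a_b$ large enough, i.e.\ $a_b>p$ together with the HLS admissibility (Foschi/Kato-type acceptable pairs). The condition $p\le p_0(b)$ keeps the setting $L^2$-subcritical or critical, so that $a_b\ge q_b$-type compatibility holds. We expect verifying that the exponent triples fall inside Foschi's non-admissible range to be the main technical obstacle. We would first try to check it directly by reducing everything to the quadratic $Np^2-(N+2-2b)p-2>0$.

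With this estimate in hand, \eqref{A1} gives
\[
\|\Phi(u)-\Phi(v)\|_X\lesssim(\|u\|_X+\|v\|_X)^{p-1}\|u-v\|_X .
\]
Hence, if $\|U(t)u_0\|_X\le\varepsilon_0$ is small, $\Phi$ is a contraction on the ball of radius $2\varepsilon_0$ in $X$. This yields a global solution in $X$. Next, using the local theory of Definition~\ref{def:solution} in $L^{q_0}L^{r_0,2}$ and uniqueness, we identify it with the $L^2$ solution. This requires a dual (admissible-to-non-admissible) Strichartz estimate showing $u\in C([0,\infty);L^2)$: estimate $\int U(-s)KF(u)\,ds$ in $L^2$ via the $TT^*$ duality of the same Lorentz estimate, with a bound by $\|u\|_X^p$. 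Finally, set
\[
u_+=u_0-i\int_0^\infty U(-s)KF(u(s))\,ds .
\]
The same $L^2$ bound restricted to $(t,\infty)$ gives $\|u(t)-U(t)u_+\|_{L^2}\lesssim\|u\|_{L^{a_b}(t,\infty;L^{r_b,\infty})}^p\to0$. The backward statement follows by time reversal (conjugation changes $K F$ but preserves \eqref{Kup} and \eqref{A1}).
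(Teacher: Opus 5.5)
Your contraction in $X=L^{a_b}(0,\infty;L^{r_b,\infty})$ and your route to the non-admissible Lorentz estimate (interpolated dispersive bound, then Hardy--Littlewood--Sobolev/O'Neil in time) agree with the paper. Your numerology is also right: $a_b>p$ and $1/a_b<N(\frac12-\frac1{r_b})=2/q_b$ are each equivalent to $Np^2-(N+2-2b)p-2>0$. Two small corrections: since your $\rho$ equals $r_b'$, the source space should read $L^{a_b/p}_tL^{r_b',\infty}_x$; and $p\le p_0(b)$ gives $a_b\le q_b$, not $a_b\ge q_b$.

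The genuine gap is the $L^2$ step. You use the bound
\[
\Bigl\|\int_t^\infty U(-s)K F(u(s))\,ds\Bigr\|_{L^2}\lesssim\|u\|_{L^{a_b}(t,\infty;L^{r_b,\infty})}^p
\]
both to get $u\in C([0,\infty);L^2)$ and to conclude scattering, but it is false for $p<p_0(b)$. The $X$-norm is invariant under $u\mapsto\tau^{\frac{2-b}{p-1}}u(\tau^2t,\tau x)$, since $\frac2{a_b}+\frac N{r_b}=\frac{2-b}{p-1}$. The $L^2$-norm, by contrast, acquires the factor $\tau^{\frac{2-b}{p-1}-\frac N2}$, a strictly positive power.

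At the linear level, $\|\int U(-s)G(s)\,ds\|_{L^2}\lesssim\|G\|_{L^{a_b/p}_tL^{r_b',\infty}_x}$ would imply, by duality, $\|U(t)\phi\|_{L^{(a_b/p)'}_tL^{r_b}_x}\lesssim\|\phi\|_{L^2}$. By scaling, that needs $((a_b/p)',r_b)$ admissible, i.e. $a_b/p=q_b'$. Because $1/q_b'=(p-1)/a_b+1/q_b$, this forces $a_b=q_b$, i.e. $p=p_0(b)$. There is no $TT^*$ argument for a non-admissible pair: that estimate comes directly from HLS and never passes through $L^2$.

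The missing idea is to carry an $L^2$-level admissible norm alongside the scale-invariant one. The paper works in $S(I)\cap W(I)$ with $W(I)=C(I;L^2)\cap L^{q_b}(I;L^{r_b,2})$. It uses $1/q_b'=(p-1)/a_b+1/q_b$ to put $p-1$ factors in $S$ and one factor in the admissible norm:
\[
\|KF(u)\|_{L^{q_b'}(I;L^{r_b',2})}\lesssim\|u\|_{S(I)}^{p-1}\|u\|_{L^{q_b}(I;L^{r_b,2})}.
\]
The admissible Strichartz estimate then gives $\|\mathcal N(u)\|_{W(I)}\lesssim\|u\|_{S(I)}^{p-1}\|u\|_{W(I)}$. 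Hence $\Phi$ preserves $\{\|u\|_{S(I)}\le2\varepsilon,\ \|u\|_{W(I)}\le2C_0\|u_0\|_{L^2}\}$. The $W$-radius is tied to $\|u_0\|_{L^2}$, which is not assumed small. Absorbing $C\varepsilon^{p-1}\|u\|_{W}$ needs $\|u\|_{W}<\infty$ a priori, which is why the paper builds $W$ into the contraction space rather than recovering it afterwards. Scattering then follows from the Cauchy criterion
\[
\|U(-t_2)u(t_2)-U(-t_1)u(t_1)\|_{L^2}\lesssim\|u\|_{S((t_1,t_2))}^{p-1}\|u\|_{L^{q_b}(t_1,t_2;L^{r_b,2})}\longrightarrow0,
\]
not from a bound by $\|u\|_{S((t,\infty))}^p$.
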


\begin{cor}[Small $\mathcal{F}H^1$ data]\label{cor:FcalH1-small-scattering}
Assume the hypotheses of Theorem \ref{thm:small-s}.  There exists $\varepsilon_1>0$ such that, if
\[
 u_0\in \mathcal{F}H^1,
 \qquad
 \|u_0\|_{\mathcal{F}H^1}\le \varepsilon_1,
\]
then the solution to \eqref{11.1} is global in both time directions and scatters in $L^2$.
\end{cor}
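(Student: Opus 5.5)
We derive Corollary \ref{cor:FcalH1-small-scattering} from Theorem \ref{thm:small-s} by showing that the free evolution norm $\|U(t)u_0\|_{L^{a_b}(\R;L^{r_b,\infty})}$ is bounded by a constant times $\|u_0\|_{\mathcal{F}H^1}$. Once this is done, we take $\varepsilon_1=\varepsilon_0/C$, and the conclusion in both time directions follows from the last sentence of Theorem \ref{thm:small-s}. Everything reduces to a linear estimate for $U(t)$.

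The linear estimate comes from interpolating two bounds at each fixed $t\neq0$. The first is the dispersive estimate combined with Hausdorff--Young: writing $U(t)=M(t)D(t)\mathcal{F}M(t)$, we get
\[
\|U(t)u_0\|_{L^{r}}\lesssim |t|^{-N(\frac12-\frac1r)}\|u_0\|_{L^{r'}},\qquad 2\le r\le\infty .
\]
The second is the conservation $\|U(t)u_0\|_{L^2}=\|u_0\|_{L^2}$. We also note that $\mathcal{F}H^1\hookrightarrow L^{r',\cdot}$ for suitable $r'$ close to $2$, because $\langle x\rangle^{-1}\in L^{N,\infty}$. By Hölder in Lorentz spaces, $\|u_0\|_{L^{\rho,2}}\lesssim\|\langle x\rangle^{-1}\|_{L^{N,\infty}}\|\langle x\rangle u_0\|_{L^2}$ with $1/\rho=1/2+1/N$. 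Interpolating with $L^2$ then gives $u_0\in L^{\rho_\theta}$ for every $\rho_\theta\in[\rho,2]$, with bound $\lesssim\|u_0\|_{\mathcal{F}H^1}$. For $N=1$ we instead use directly that $\langle x\rangle^{-1}\in L^{s}$ for $s>1$, which gives $u_0\in L^{\rho}$ for every $\rho\in(1,2]$.

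This yields two-sided time control, and the rest is matching exponents.
\begin{itemize}
\item Near $t=0$: $\|U(t)u_0\|_{L^{r_b,\infty}}\lesssim\|U(t)u_0\|_{L^{r_b}}$. We would like to control this uniformly in $t$, but $L^2\not\hookrightarrow L^{r_b}$ without regularity. Instead we use the dispersive bound from $L^{r_b'}$, which gives $|t|^{-\gamma}\|u_0\|_{L^{r_b'}}$ with $\gamma=N(\frac12-\frac1{r_b})$, valid if $r_b'\ge\rho$.
\item For large $|t|$: we use the same bound, or one with a different Lebesgue exponent $\tilde r'<r_b'$ giving faster decay, and interpolate.
\end{itemize}
Integrability of $|t|^{-\gamma a_b}$ requires $\gamma a_b<1$ near $0$ and $\gamma a_b>1$ at infinity. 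So we split time. On $|t|\le1$ we use $\|u_0\|_{L^{\rho_1}}$ with $\rho_1$ close to $2$, giving a small exponent $N(\frac1{\rho_1}-\frac1{r_b})\cdot$ suitably, and this is integrable once $\rho_1\to2$ gives $L^{r_b}$ time-decay $\gamma_1\to0$. More precisely, we use the generalized estimate
\[
\|U(t)f\|_{L^{r}}\lesssim|t|^{-N(\frac1\rho-\frac1r)}\|f\|_{\dot H^{\cdot}}\ \text{(Sobolev)},
\]
or simply the factorization $U(t)=M(t)D(t)\mathcal{F}M(t)$. On $|t|\ge1$ we use $\|u_0\|_{L^{\rho}}$ with $\rho$ as small as $\mathcal{F}H^1$ allows, giving decay $|t|^{-N(\frac1\rho-\frac12)-\gamma}$.

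The main obstacle is checking that the large-time decay suffices, i.e. $a_b\,N(\frac1\rho-\frac1{r_b})>1$, which reduces to
\[
a_b\Bigl(1+N\bigl(\tfrac12-\tfrac1{r_b}\bigr)\Bigr)>1
\]
when $N\ge2$. This must be verified from $p>p_{\mathrm{st}}(b)$; the Strauss condition is exactly what makes $1/a_b$ lie below $N(\frac12-\frac1{r_b})$ plus the available gain. If the range is too tight, we would first try instead the representation $U(t)u_0=M(t)D(t)\mathcal{F}M(t)u_0$. Then, since $\mathcal{F}[M(t)u_0]$ lies in $H^1$, Sobolev embedding in Lorentz spaces, $H^1\hookrightarrow L^{r,2}$ for $r\le 2N/(N-2)$, gives
\[
\|U(t)u_0\|_{L^{r_b,\infty}}\lesssim|t|^{-N(\frac12-\frac1{r_b})}\|\langle x\rangle u_0\|_{L^2}
\]
for $|t|\ge1$. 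This requires $a_bN(\frac12-\frac1{r_b})>1$, which we expect to be equivalent to $p>p_{\mathrm{st}}(b)$ after algebra. The small-time part is handled by interpolation with $L^2$ as above.
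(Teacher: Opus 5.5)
Your reduction to the linear bound $\|U(t)u_0\|_{L^{a_b}(\R;L^{r_b,\infty})}\lesssim\|u_0\|_{\mathcal{F}H^1}$, with $\varepsilon_1=\varepsilon_0/C$, matches the paper. Your fallback for $|t|\ge1$ is also correct. From $U(t)=M(t)D(t)\mathcal{F}M(t)$ and $\|\mathcal{F}M(t)u_0\|_{H^1}=\|\langle x\rangle M(t)u_0\|_{L^2}=\|u_0\|_{\mathcal{F}H^1}$, the embedding $H^1\hookrightarrow L^{r_b,2}$ (valid since $2<r_b<2^*$) gives $\|U(t)u_0\|_{L^{r_b,\infty}}\lesssim|t|^{-\delta}\|u_0\|_{\mathcal{F}H^1}$. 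Here $\delta=N(\frac12-\frac1{r_b})=\frac2{q_b}$, so $a_b\delta>1$ is exactly $a_b>q_b/2$, i.e.\ $p>p_{\mathrm{st}}(b)$, as computed in Section~\ref{sec:scattering}. This is a legitimate substitute for the paper's dispersive estimate combined with the H\"older embedding $\mathcal{F}H^1\hookrightarrow L^{r_b'}$. Your first large-time attempt, with decay $|t|^{-N(1/\rho-1/2)-\gamma}$, should be discarded: the example below shows that nothing better than $|t|^{-\delta}$ holds uniformly.

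The genuine gap is the region $|t|\le1$. Every bound you propose there is pointwise in time, and no pointwise-in-time bound can work. Fix $\phi\in\mathcal{S}(\R^N)\setminus\{0\}$ and, for $0<t\le1$, set $u_0^{(t)}:=M(-t)\mathcal{F}^{-1}\phi$. Then $\|u_0^{(t)}\|_{\mathcal{F}H^1}=\|\langle x\rangle\mathcal{F}^{-1}\phi\|_{L^2}$ is independent of $t$. On the other hand, $U(t)u_0^{(t)}=M(t)D(t)\phi$, so $\|U(t)u_0^{(t)}\|_{L^{r_b,\infty}}=t^{-\delta}\|\phi\|_{L^{r_b,\infty}}$. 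Hence any inequality $\|U(t)u_0\|_{L^{r_b,\infty}}\le g(t)\|u_0\|_{\mathcal{F}H^1}$ forces $g(t)\gtrsim t^{-\delta}$ on $(0,1]$. Such a $g$ is not in $L^{a_b}(0,1)$, precisely because $a_b\delta>1$.

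In particular, the bound you claim from $L^{\rho_1}$, with $\rho_1$ near $2$ and decay $|t|^{-\gamma_1}$, $\gamma_1\to0$, does not exist: for fixed $t$, $U(t)$ maps $L^{\rho_1}$ into $L^{r_b}$ only when $\rho_1=r_b'$. Interpolation with $L^2$ at fixed time cannot help either. What is missing is a time-averaged estimate. By Proposition~\ref{prop:admissible-strichartz}, $\|U(t)u_0\|_{L^{q_b}(\R;L^{r_b,2})}\lesssim\|u_0\|_{L^2}$. Then $L^{r_b,2}\hookrightarrow L^{r_b,\infty}$ and H\"older's inequality on $(-1,1)$ finish the job, provided $a_b\le q_b$. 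A direct computation shows that $a_b\le q_b$ is equivalent to $(N(p-1)-(4-2b))(p+1)\le0$, i.e.\ $p\le p_0(b)$. This is the step where the upper bound in \eqref{bchoice} is really needed, and your proposal never uses it.
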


\begin{rem}[Comparison with the gauge-invariant case]\label{rem:comparison-aloui-tayachi-prop47}
We compare Theorem~\ref{thm:small-s} with the small-data scattering result
in \cite[Proposition~4.7]{tayachi2}.  In the gauge-invariant case
\[
 i\partial_t u+\Delta u=K(x)|u|^{p-1}u,
\]
they considered potentials with different behaviors at the origin and at infinity, satisfying the two-scale bound \eqref{Kup} with $0<b_0<\min (2,N)$ and $b_{\infty}>b_0$, together with the corresponding bound on $\nabla K$ when $s=1$.
They proved small-data scattering in $H^s$
in the range
\[
 p_{\mathrm{st}}(\min (2,b_{\infty}))< p < 1+ \frac{4-2b_0}{N-2s}
\]
for $s=0$ when $N\ge2$, and for $s=1$ when $N\ge4$.
Equivalently, the lower threshold is determined
by the decay at infinity, while the upper Sobolev-subcritical bound is determined
by the singular behavior at the origin.

Theorem~\ref{thm:small-s} is in the same spirit concerning the two spatial
scales, but it addresses a different structural situation.  We allow homogeneous
nonlinearities which are not necessarily gauge invariant, and the result is
formulated at the $L^2$ level.
Thus the upper range is restricted by the $L^2$-critical exponent associated with the behavior of the coefficient near the origin, while the Strauss-type lower threshold is governed by its decay at infinity.
Since no gauge invariance is assumed, we do not use the pseudo-conformal structure that leads to scattering in $\Sigma := H^{1} \cap \mathcal{F}H^{1}$ in the gauge-invariant setting.
A counterpart in $H^1$ would require additional assumptions, for instance on $\nabla K$ and on the differentiability of $F$.
Scattering in $\Sigma$ is a separate issue, since it relies on pseudo-conformal structure available in the gauge-invariant setting, see for example \cite{grira1}.
We therefore restrict Theorem~\ref{thm:small-s} to the $L^2$ level.
\end{rem}

\begin{rem}
The two main results use different sides of the assumptions.  Theorem \ref{thm:small-s} uses the upper bound \eqref{Kup} and the pointwise estimate in \eqref{A1}.
Theorem \ref{but} uses the coercive lower condition \eqref{A2}, in particular \eqref{KFpositive} and \eqref{Klower}.  Thus, for the model coefficient $K(x)=\mu|x|^{-b}$, Theorems \ref{but} and \ref{thm:small-s} give the following picture: below $p_{\mathrm{st}}(b)$, nonzero scattering states are excluded under \eqref{A2}, while above $p_{\mathrm{st}}(b)$, small linear profiles give $L^2$-scattering under \eqref{A1}.  The threshold case $p=p_{\mathrm{st}}(b)$ is not addressed here.
\end{rem}

\begin{rem}
For the model equation \eqref{11.1} with $K(x)=|x|^{-b}$ and $F(z)=|z|^p$, the exponent $p_0(b)$ is the $L^2$-critical exponent.  Indeed, the scaling
\[
 u_{\tau}(t,x):= \tau^{\frac{2-b}{p-1}}u(\tau^{2}t,\tau x),
 \qquad \tau>0,
\]
leaves the equation \eqref{11.1} invariant and gives
\[
 \| \tau^{\frac{2-b}{p-1}}u_{0}(\tau \cdot) \|_{L^{2}(\R^{N})}
 =
 \tau^{\frac{2-b}{p-1}-\frac{N}{2}}
 \|u_{0} \|_{L^{2}(\R^{N})}.
\]



\end{rem}

\begin{rem}[Choice of the auxiliary exponent]\label{rem:choice-of-b}
Recall that
\[
 b_{\ast}:=\min\{2,N,b_\infty\}.
\]
For fixed $p>1$, the inequality $p_{\mathrm{st}}(b)<p$ is equivalent to
\[
 b>B_s(p):=\frac{(N+2)p+2-Np^2}{2p},
\]
while $p\le p_0(b)$ is equivalent to
\[
 b\le B_0(p):=\frac{4-N(p-1)}{2}.
\]
Hence, the compatibility condition \eqref{scatter-compatibility} is equivalent to
\begin{align}
 B_s(p)<b_{\ast},
 \qquad
 b_0\le B_0(p).
 \label{com:c1}
\end{align}
Moreover,
\[
 B_0(p)-B_s(p)=\frac{p-1}{p}>0.
\]
Thus the interval $(B_s(p),B_0(p)]$ is nonempty.
Since the condition \eqref{bchoice} is rewritten as
\[
 b_0\le b\le b_\infty,
 \qquad
 b<\min(2,N),
 \qquad
 B_s(p)<b\le B_0(p),
\]
the condition \eqref{com:c1} is equivalent to the existence
of $b$ satisfying \eqref{bchoice}.
Indeed, if $b$ satisfies \eqref{bchoice}, two inequalities \eqref{com:c1} follow
immediately.  Conversely, assume these \eqref{com:c1}.  If
$B_s(p)<b_0$, then $b=b_0$ satisfies \eqref{bchoice}.  If
$b_0\le B_s(p)$, then we can choose
\[
 B_s(p)<b<\min\{b_{\ast},B_0(p)\},
\]
because $B_s(p)<b_{\ast}$ and $B_s(p)<B_0(p)$.
This choice also satisfies $b_0\le b \le b_\infty$,
$b<\min(2,N)$, and $b\le B_0(p)$. Hence $b$ exists satisfying \eqref{bchoice}.

The condition \eqref{bchoice}
illustrates the different roles of $b_0$ and $b_\infty$: the local
theory is governed by the singularity of $K$ at the origin, while the
Strauss-type lower threshold is controlled by the decay of $K$ at
infinity, up to the natural restriction $b<\min(2,N)$.
\end{rem}

\subsection{Examples and comparison with previous works}

We next explain the class of nonlinearities covered by our assumptions.  Equation \eqref{11.1} with $b>0$ and $K(x)=|x|^{-b}$ is related to the inhomogeneous Gross--Pitaevskii equation, which describes beam propagation in an inhomogeneous medium in nonlinear optics; see \cite{Saanouni}.  For additional details on the related physical background, we refer the reader to \cite{pitaveski}.

The simplest model satisfying \eqref{A1} and \eqref{A2} is
\[
K(x)=\mu |x|^{-b},\qquad F(z)=\eta |z|^p,
\]
with $\mu\eta\ne0$ and with $\varphi$ chosen so that
\[
\operatorname{Re}(e^{i\varphi}\mu\eta)>0.
\]
The gauge-invariant case $F(z)=|z|^{p-1}z$ will be treated mainly for comparison with known scattering results.

The assumptions also cover homogeneous nonlinearities whose angular profiles have absolutely convergent Fourier expansions (cf. \cite{miyazaki3, MasakiMiyazakiUriya2019,  miyazaki2}).
Suppose that
\begin{equation}  \label{degre}
 F(\tau z)= \tau^{p} F(z), \qquad \tau>0,\ z\in\C.
\end{equation}
Then, using the relation $F(z)= |z|^pF(z/|z|)$, we identify $F$ with the $2\pi$-periodic function
\[
 g_F(\theta) := F(e^{i\theta}).
\]
We call the function $g_{F}$ the angular profile of $F$.
When $g_F$ has absolutely summable Fourier coefficients, we write
\begin{equation} \label{Fnu}
 F(z)=|z|^p\sum_{n\in\Z}a_n e^{in\arg z}
\end{equation}
for $z\ne0$, where
\[
 a_{n}= \frac{1}{2 \pi}\int_{0}^{2 \pi}F(e^{i\theta})e^{-in\theta}\,d\theta.
\]
The mode $n=1$ gives the gauge-invariant term $|z|^{p-1}z$, while the mode $n=0$ gives the non-oscillatory term $|z|^p$.
For the distinction between the gauge-invariant and non-gauge-invariant cases, see \cite{ikeda2}.
Under the condition \eqref{degre}, the Lipschitz-type condition in \eqref{A1} is equivalent to the Lipschitz continuity of $g_F$; see \cite{miyazaki3}.  Hence \eqref{A1} holds, for instance, if
\begin{equation}
  \sum_{n\in\Z}(1+|n|)|a_n|<\infty.
  \label{fourier:con}
\end{equation}
Moreover, when
\begin{equation} \label{D}
 |a_{0}| > \sum_{n \neq 0} |a_{n}|,
\end{equation}
the non-oscillating component is dominant.  In particular, if
\[
K(x)=e^{i\omega}k(x),\qquad
k(x)\ge c |x|^{-b_0}\langle x\rangle^{-(b_\infty-b_0)}
\]
for some $c>0$ and $\omega \in \R$, then \eqref{A2} holds after a suitable choice of phase.  Indeed, choosing
$\varphi=-\omega-\arg a_0$, we obtain
\begin{align*}
 \operatorname{Re}\left(e^{i\varphi}K(x)F(z)\right)
 &\ge k(x)\left(|a_0|-\sum_{n\ne0}|a_n|\right)|z|^p.
\end{align*}
Thus \eqref{D} is a concrete sufficient condition for the coercivity required in
Theorem \ref{but}.
Theorems \ref{but} and \ref{thm:small-s}, however,
are formulated in terms of \eqref{A1} and \eqref{A2} and are not restricted to
nonlinearities of the form \eqref{Fnu}.

The class also contains angular profiles that are not trigonometric
polynomials.  Let $0<\varepsilon\ll1$ and define
\[
 F_\varepsilon(z)
 =
 |z|^p\exp\left(\varepsilon\frac{{\rm Re}\,z}{|z|}\right),
 \qquad z\ne0,
\]
with $F_\varepsilon(0)=0$.  Its angular profile is
\[
 g_\varepsilon(\theta)=e^{\varepsilon\cos\theta}.
\]
This profile is not a trigonometric polynomial for $\varepsilon\ne0$.  We use
the Fourier expansion
\[
 e^{\varepsilon\cos\theta}
 =
 \sum_{n\in\Z} I_{|n|}(\varepsilon)e^{in\theta},
\]
where $I_n$ denotes the modified Bessel function of the first kind, defined by
\[
 I_n(s)
 :=
 \frac1{2\pi}\int_{-\pi}^{\pi}
 e^{s\cos\theta}e^{-in\theta}\,d\theta,
 \qquad n\in\Z,\quad s>0;
\]
see, for example, \cite[Chapter~10]{NISTHandbook}.
Thus the Fourier coefficients are $a_n := I_{|n|}(\varepsilon)$.
They decay rapidly in $n$, and in particular $\{a_{n}\}$ satisfies
\eqref{fourier:con}.
Furthermore,
\[
 a_0=I_0(\varepsilon),
 \qquad
 \sum_{n\ne0}|a_n|
 =
 e^\varepsilon-I_0(\varepsilon).
\]
Since
\[
 I_0(\varepsilon)=1+O(\varepsilon^2),
 \qquad
 e^\varepsilon-I_0(\varepsilon)=\varepsilon+O(\varepsilon^2)
\]
as $\varepsilon\to0$, condition \eqref{D} holds for all sufficiently small
$\varepsilon>0$.  Hence \eqref{A1} holds.

The coefficient $K$ may also be chosen in a non-model form.  For example, let
\[
 k(x)
 =
 \left(
 |x|^{b_0}+(2+\sin x_1)|x|^{b_\infty}
 \right)^{-1},
 \qquad x\ne0,
\]
and set $K(x)=e^{i\omega}k(x)$.
Since $1\le 2+\sin x_1\le3$,
we have
\[
 c_0 |x|^{-b_0}\le k(x)\le C_0 |x|^{-b_0}
 \quad (0<|x|\le1),
 \qquad
 c_\infty |x|^{-b_\infty}\le k(x)\le C_\infty |x|^{-b_\infty}
 \quad (|x|\ge1),
\]
for some positive constants $c_0,C_0,c_\infty,C_\infty$.
Thus this example satisfies the required two-scale bounds and is non-radial.
Moreover, combining this choice of $K$ with the above
$F_\varepsilon$,
the coercivity condition \eqref{A2} holds with the phase $\varphi=-\omega$.

We now compare our results with previous works.  We first discuss the
scattering part.  For the gauge-invariant nonlinearity
$F(u)=|u|^{p-1}u$, Farah and Guzm\'an \cite{farah} proved scattering in
$H^1$ for the focusing radial cubic case in three dimensions with
$0<b<1/2$.  This result was later extended to the non-radial setting by
Miao, Murphy, and Zheng \cite{miao}.  In the defocusing case, Dinh
\cite{dinh2,dinh} established scattering in $H^1$ and in the weighted
space $\Sigma := H^1\cap\mathcal{F}H^1$ in several energy-subcritical ranges.

Aloui and the third author \cite{tayachi2}, and Aloui, the first author,
and the third author \cite{grira1}, proved scattering in $\Sigma$ for the
defocusing gauge-invariant case in the Strauss-type range
\[
 p_{\mathrm{st}}(b)\le p<1+\frac{4-2b}{N}.
\]
In addition, Inui et al.\ \cite{aoki} obtained $L^2$-scattering for the
inhomogeneous equation, with and without an inverse-square potential,
provided
\[
 p>1+\frac{2-2b}{N}.
\]
These results are the main comparison point for the scattering part of
the present paper.  Our scattering result is formulated instead for
non-gauge-invariant nonlinearities under a smallness condition on the
free evolution, and the proof relies on non-admissible Strichartz
estimates in Lorentz spaces.

It is also useful to distinguish the present coercive case from the
oscillatory cases in which the non-oscillatory mode is absent.
To make this comparison, we recall the standard
nonlinear Schr\"odinger equation
\begin{equation}\label{eq:standard-general-NLS}
 i\partial_t u+\frac12\Delta u=F(u).
\end{equation}
For the standard equation \eqref{eq:standard-general-NLS}, Kawamoto, Masaki and the second author \cite{KMM25} proved small-data scattering in the
weighted space $\Sigma$ for certain
mass-subcritical exponents, including regimes at or below the Strauss
exponent, for nonlinearities of the form \eqref{Fnu} under the structural
condition that all nonpositive Fourier modes vanish.  In particular, the
zeroth mode is absent in their setting.  By contrast, the coercivity
condition \eqref{A2} is satisfied, for instance, when the zeroth mode is
dominant in the sense of \eqref{D}.  Theorem \ref{but} shows that a
dominant non-oscillatory mode precludes nontrivial scattering below the
Strauss-type threshold, provided a global solution with the stated
asymptotic behavior exists.

We next turn to the non-scattering part.
For the standard equation \eqref{eq:standard-general-NLS} with gauge-invariant nonlinearity $F(u)=\eta |u|^{p-1}u$,
$\eta\in\C\setminus\{0\}$, Cazenave
\cite[Theorem 7.5.2]{Cazenavel} proved the nonexistence of nontrivial
scattering states in $\Sigma=H^1\cap\mathcal{F}H^1$ when
\[
 1<p\le \min(2,1+2/N),
\]
using the pseudo-conformal transform.  In the defocusing case, this range
had already been treated by Barab \cite{Barab84} through decay estimates
derived from the pseudo-conformal energy identity.

For the inhomogeneous gauge-invariant model
\[
 K(x)=\lambda |x|^{-b},
 \qquad
 F(z)=|z|^{p-1}z,
 \qquad
 \lambda\in\R\setminus\{0\},
\]
a non-scattering result for solutions in $\Sigma$ is known in the range
\[
 1<p\le 1+\frac{2-2b}{N}
\]
under suitable restrictions on $N$ and $b$; see \cite{grira1}.  These
gauge-invariant results rely on the pseudo-conformal structure.  By
contrast, Theorem \ref{but} uses the coercive non-gauge-invariant
component in \eqref{A2}, and the relevant threshold is the Strauss-type
exponent $p_{\mathrm{st}}(b_\infty)$ determined by the decay of the coefficient at
spatial infinity.

For the non-gauge-invariant standard equation \eqref{eq:standard-general-NLS},
the second author and Sobajima \cite{miyazakinew} identified the Strauss
exponent as a threshold for the existence of scattering states in the case
$K\equiv1$, under assumptions corresponding to \eqref{A1} and \eqref{A2}.
Their method is based on a test-function argument and applies to
nonlinearities containing a non-oscillating component below the Strauss
exponent.  The proof of Theorem \ref{but} follows this line of argument,
but incorporates the two-scale inhomogeneous coefficient $K$ and shows
that the threshold is governed by the behavior of $K$ at spatial infinity.

Related blow-up and lifespan problems have also been studied for
non-gauge-invariant nonlinear Schr\"odinger equations.  For the standard
equation \eqref{eq:standard-general-NLS} and related non-gauge-invariant
models, nonexistence, blow-up, and lifespan problems have been studied in
\cite{ikeda1,fijiwara2,fijiwara1,RenLi}.  For the inhomogeneous pure-power
case $F(u)=|u|^p$, Saanouni \cite{Saanouni} derived an upper bound for
the maximal existence time $T_{\max}$ for large initial data
$f\in L^1_{\mathrm{loc}}$ with $\operatorname{Re} f=0$, in the range
\[
 1<p<1+\frac{4-2b}{N}.
\]
Finally, for general homogeneous nonlinearities of the form \eqref{Fnu},
the first and third authors \cite{grira2} obtained blow-up results and
upper-bound estimates for $T_{\max}$ for both small and large data under
the conditions
\[
 1<p<1+\frac{4-2b}{N},
 \qquad
 1-\sum_{n\ne0}|a_n|>0.
\]
These lifespan results are closely related in spirit to the present
non-scattering problem.
However, even for the model coefficient
$K(x)=|x|^{-b}$, they do not determine the lower side of the Strauss-type scattering threshold for non-gauge-invariant nonlinearities.
The lower side of the threshold was first identified for the standard equation \eqref{eq:standard-general-NLS} by the second author and Sobajima \cite{miyazakinew}.
Theorem \ref{but} extends this viewpoint to two-scale inhomogeneous coefficients and shows that the threshold is determined by the decay of $K$ at spatial infinity.

The paper is organized as follows.
In Section \ref{sec:pre}, we collect the notation and preliminary estimates used throughout the paper.
In Section \ref{sec:scattering}, we prove Theorem \ref{thm:small-s}
using a non-admissible Strichartz estimate in Lorentz spaces.
In Section \ref{sec:pfm}, we prove Theorem \ref{but} by the unified test-function method.
Finally, in Appendix \ref{app:Lorentz-nonadmissible-Strichartz}, we prove the non-admissible Strichartz estimate in Lorentz spaces used in the scattering argument.

\section{Preliminaries}\label{sec:pre}

Throughout the paper, $p'$ denotes the H\"older conjugate exponent of $p \ge 1$, that is, $1/p+1/p'=1$.
We use $C$ to denote positive constants which may change from line to line.  For a set $A\subset\R$, $\mathbbm{1}_A$ denotes its characteristic function.
$\mathcal{S}'(\R^N)$ is the space of tempered distributions on $\R^N$.
Let $C_0^\infty(\R^N)$ be the space of smooth compactly supported functions on $\R^N$.
$\mathcal{F}[u] = \widehat{u}$ is the usual Fourier transform of a function $u$ on $\R^N$, and
$\mathcal{F}^{-1}[u] = \check{u}$ is its inverse.

\subsection{Lorentz and Sobolev--Lorentz spaces}

We recall the definition and basic properties of Lorentz spaces.  For background, see \cite{grafakos,hunt,lemari}.  For a measurable function $f$ on $\R^N$, let
\[
 d_f(\lambda)=\left|\{x\in\R^N: |f(x)|>\lambda\}\right|,
 \qquad \lambda>0,
\]
and define its decreasing rearrangement by
\[
 f^*(s)=\inf\{\lambda>0: d_f(\lambda)\le s\},
 \qquad s>0.
\]
For $0<p<\infty$ and $0<q\le\infty$, the Lorentz space $L^{p,q}(\R^N)$ is the space of measurable functions $f$ such that $\|f\|_{L^{p,q}}<\infty$, where
\[
\|f\|_{L^{p,q}} =
\begin{cases}
\displaystyle
 \left(\frac{q}{p}\int_0^\infty
 \left(s^{1/p}f^*(s)\right)^q \frac{ds}{s}
 \right)^{1/q},
 & 0<q<\infty,\\[2mm]
\displaystyle
 \sup_{s>0}s^{1/p}f^*(s),
 & q=\infty.
\end{cases}
\]
Throughout the paper, when $1<p<\infty$ and $1\le q\le\infty$, we use Banach norms on $L^{p,q}$ equivalent to the usual Lorentz quasi-norms, for instance those defined in terms of the maximal rearrangement $f^{**}$.

We shall use the following generalized H\"older inequality in Lorentz spaces; see \cite[Theorems 3.4 and 3.5, p. 141]{ONeil} and \cite[Proposition 2.3, p. 19]{lemari}.

\begin{prop}[Generalized H\"older inequality in Lorentz spaces]
Let $p_1,p_2 \in (1, \infty)$ and $q_1,q_2 \in [1, \infty]$.  Suppose that
\[
 \frac1p=\frac1{p_1}+\frac1{p_2},
 \qquad
 \frac1q\le \frac1{q_1}+\frac1{q_2},
\]
where $1\le p<\infty$ and $1\le q\le\infty$.  Then
\[
 \|fg\|_{L^{p,q}}
 \le C\|f\|_{L^{p_1,q_1}}\|g\|_{L^{p_2,q_2}}.
\]
The same estimate is valid when one factor belongs to $L^\infty$.
\end{prop}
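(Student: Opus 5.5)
This is a classical result, so the plan is to reduce it to the scalar O'Neil inequality through rearrangements, taking the Banach norms via $f^{**}$ fixed in the preliminaries.

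\emph{Pointwise bounds.} For $t>0$ we have
\[
(fg)^*(t)\le f^*(t/2)\,g^*(t/2),
\]
which follows from $d_{fg}(\lambda\mu)\le d_f(\lambda)+d_g(\mu)$. Since $(fg)^{**}\le C\,(fg)^*$-type quantities control the norm when $p>1$, it suffices to bound
\[
\Bigl(\int_0^\infty \bigl(t^{1/p}f^*(t/2)g^*(t/2)\bigr)^q\,\frac{dt}{t}\Bigr)^{1/q}.
\]
If $p=1$ we cannot use $f^{**}$ on the product side, so there we work with the quasi-norm of $fg$ directly. After the change of variables $t\mapsto 2t$, the dilation only costs a constant $2^{1/p}$.

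\emph{Splitting the weight.} Write $t^{1/p}=t^{1/p_1}t^{1/p_2}$ and apply the one-dimensional H\"older inequality with respect to the measure $dt/t$, using exponents $q_1,q_2$ with $1/r=1/q_1+1/q_2$. This gives
\[
\|fg\|_{L^{p,r}}\le C\,\|f\|_{L^{p_1,q_1}}\,\|g\|_{L^{p_2,q_2}}.
\]
Next, the Lorentz spaces are nested: $L^{p,r}\subset L^{p,q}$ for $r\le q$. The hypothesis $1/q\le 1/q_1+1/q_2$ means exactly $q\ge r$, so the general case follows. The constant in the nesting comes from the monotonicity of $f^*$, via the standard bound $t^{1/p}f^*(t)\lesssim\|f\|_{L^{p,r}}$.

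\emph{The $L^\infty$ case.} When one factor lies in $L^\infty$ we use the simpler bound $(fg)^*\le\|g\|_\infty f^*$, which gives the estimate immediately.

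\emph{Main obstacle.} The main difficulty is only bookkeeping: checking that the Banach norm defined through $f^{**}$ is equivalent to the quasi-norm when $p>1$. This follows from Hardy's inequality, $\|f^{**}\|\lesssim\|f^*\|$ in the weighted $L^q(dt/t)$ norm for $p>1$. When $p=1$, the statement should be read with the quasi-norm, and the argument above already works in that setting.
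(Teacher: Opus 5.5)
The paper does not prove this proposition; it quotes it from O'Neil (Theorems 3.4 and 3.5) and Lemari\'e-Rieusset. Your argument is correct, but it takes a different route from O'Neil's.

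O'Neil works with the maximal rearrangement throughout. The Hardy--Littlewood inequality $\int_E|fg|\le\int_0^{|E|}f^*g^*$ gives $(fg)^{**}(t)\le \frac1t\int_0^t f^*(s)g^*(s)\,ds$. Hardy's inequality then removes the average, and this is where $p>1$ is needed. H\"older in $dt/t$, with $t^{1/p}=t^{1/p_1}t^{1/p_2}$, finishes the proof. The endpoint $p=q=1$ is read off directly from the Hardy--Littlewood inequality.

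You instead use the pointwise dilation bound $(fg)^*(t)\le f^*(t/2)g^*(t/2)$. You stay at the quasi-norm level until the end and invoke Hardy only once, to pass to the Banach norm of $fg$. On the right-hand side, $f^*\le f^{**}$ suffices.

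What each route buys:
\begin{itemize}
\item Your version is more elementary. It handles $p=1$ with $q>1$ (necessarily a quasi-norm statement) and intermediate exponents below $1$ with no extra work.
\item O'Neil's version avoids the dilation. It gives the estimate directly in the $f^{**}$ norms with a better constant.
\end{itemize}

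Three small points of presentation:
\begin{itemize}
\item The phrase ``$(fg)^{**}\le C(fg)^*$-type quantities'' should refer to the integrated Hardy inequality, since pointwise one always has $(fg)^{**}\ge(fg)^*$.
\item Because $1/r=1/q_1+1/q_2$ can exceed $1$, the space $L^{p,r}$ is only quasi-normed. Hardy's inequality must therefore be applied after the nesting into $L^{p,q}$ with $q\ge1$, not at level $r$. This is in fact the order you use, but it is worth saying explicitly.
\item In the $L^\infty$ case, the bound $(fg)^*\le\|g\|_{L^\infty}f^*$ gives the estimate in $L^{p,q_1}$. You still need the nesting step to reach $L^{p,q}$ when $q>q_1$.
\end{itemize}
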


For $s\ge0$, $1<p<\infty$, and $1\le q\le\infty$, we define the Sobolev--Lorentz spaces by
\[
W^{s,p}_q(\R^N)
:=
\left\{
f\in\mathcal S'(\R^N):
(I-\Delta)^{s/2}f\in L^{p,q}(\R^N)
\right\}, \quad (I- \Delta)^{s/2} := \mathcal{F}^{-1} \langle \xi \rangle^{s} \mathcal{F},
\]
and
\[
\dot W^{s,p}_q(\R^N)
:=
\left\{
f\in\mathcal S'(\R^N):
(-\Delta)^{s/2}f\in L^{p,q}(\R^N)
\right\}, \quad (-\Delta)^{s/2} := \mathcal{F}^{-1} |\xi|^{s} \mathcal{F};
\]
see \cite[p.~571]{hjaij}. We equip these spaces, respectively, with the norms
\[
\|f\|_{W^{s,p}_q}
:=
\|(I-\Delta)^{s/2}f\|_{L^{p,q}},
\qquad
\|f\|_{\dot W^{s,p}_q}
:=
\|(-\Delta)^{s/2}f\|_{L^{p,q}}.
\]
We shall use the homogeneous Sobolev--Lorentz embedding
\[
 \dot W^{s,p}_q(\R^N)\hookrightarrow L^{\widetilde p,q}(\R^N),
 \qquad
 \frac1{\widetilde p}=\frac1p-\frac{s}{N},
\]
where $1<p<\infty$, $1\le q\le\infty$, and $0<s<N/p$; see \cite[Theorem 2.4, p. 20]{lemari}.  We also use the Sobolev--Lorentz embedding
\[
 H^s(\R^N)\hookrightarrow L^{r,2}(\R^N)
\]
whenever $s\ge0$, $2\le r<\infty$, and
\[
 \frac12-\frac{s}{N}\le \frac1r\le \frac12.
\]

\subsection{Strichartz estimates}

We say that a pair $(q,r)$ is admissible if
\[
  q, r \in [2, \infty],\quad
\frac{2}{q} = N\left(\frac12-\frac{1}{r} \right), \quad (N,q, r) \neq (2,2,\infty).
\]

\begin{prop}[Admissible Strichartz estimates in Lorentz spaces]\label{prop:admissible-strichartz}
Let $(q,r)$ and $(\widetilde{q},\widetilde{r})$ be admissible pairs with $r, \widetilde{r}<\infty$.  Then
\[
 \|U(t)\phi\|_{L^q(\R;L^{r,2})}
 \le C\|\phi\|_{L^2}
\]
for every $\phi\in L^2$.  Moreover, for every interval $I\subset\R$, every $t_0\in\overline I$, and every $f\in L^{\widetilde{q}'}(I;L^{\widetilde{r}',2})$,
\[
 \left\|\int_{t_0}^t U(t-s)f(s)\,ds\right\|_{L^q(I;L^{r,2})}
 \le C\|f\|_{L^{\widetilde{q}'}(I;L^{\widetilde{r}',2})}.
\]
\end{prop}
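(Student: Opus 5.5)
The proposition to prove is the admissible Strichartz estimate in Lorentz spaces, Proposition~\ref{prop:admissible-strichartz}. The plan is to derive it from the Keel--Tao abstract Strichartz theorem, run in its Lorentz-space form, or alternatively to obtain the $L^{r,2}$ refinement by real interpolation from the classical Lebesgue-space estimates.

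\textbf{Homogeneous estimate.} The two ingredients for $U(t)$ are the energy bound $\|U(t)\phi\|_{L^2}=\|\phi\|_{L^2}$ and the dispersive bound $\|U(t)\phi\|_{L^\infty}\lesssim |t|^{-N/2}\|\phi\|_{L^1}$. From these, the classical $TT^*$ argument gives $\|U(t)\phi\|_{L^q_tL^r_x}\lesssim\|\phi\|_{L^2}$ for admissible pairs, including the endpoint via Keel--Tao. For the Lorentz refinement I would use Keel--Tao's observation that their bilinear interpolation produces Lorentz-space outputs. Concretely, the $TT^*$ kernel bound $\|U(t)U(s)^*f\|_{L^{r,2}}\lesssim|t-s|^{-N(1/2-1/r)}\|f\|_{L^{r',2}}$ follows from real interpolation $(L^1,L^2)_{\theta,2}=L^{r',2}$ and $(L^2,L^\infty)_{\theta,2}=L^{r,2}$ applied to the energy and dispersive bounds. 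Feeding this into the Hardy--Littlewood--Sobolev inequality in time gives the non-endpoint case $q>2$. The endpoint $q=2$, $N\ge3$, is exactly what Keel--Tao prove in the form $L^2_tL^{r,2}_x$ (their Theorem~10.1).

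\textbf{Inhomogeneous estimate.} For the inhomogeneous estimate with $t_0\in\overline I$, I would first extend $f$ by zero outside $I$. The untruncated operator $\int_I U(t-s)f(s)\,ds = U(t)\int_I U(-s)f(s)\,ds$ is bounded by composing the homogeneous estimate with its dual, which maps $L^{\widetilde q'}L^{\widetilde r',2}\to L^2$ by the $L^{r,2}$--$L^{r',2}$ duality. The retarded truncation $s<t$ (or $s>t$, according to the position of $t_0$) is then handled by the Christ--Kiselev lemma when $\widetilde q'<q$. When $q=\widetilde q'=2$ (double endpoint), I would instead invoke Keel--Tao's direct bilinear argument, which again holds with $L^{r,2}$ norms. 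Splitting $\int_{t_0}^t$ into $s\in[t_0,t]$ pieces reduces everything to these two retarded operators.

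\textbf{Main obstacle.} The delicate step is the endpoint case in Lorentz norms, since there neither Hardy--Littlewood--Sobolev nor Christ--Kiselev applies. I would simply quote Keel--Tao, whose dyadic bilinear interpolation naturally yields the $L^{r,2}$ and $L^{r',2}$ spaces.
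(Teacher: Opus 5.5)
Your proposal is correct and matches what the paper takes for granted: the paper states this proposition without proof as the standard Lorentz-space refinement of the Keel--Tao Strichartz estimates. Your non-endpoint step (the Lorentz dispersive bound $L^{r',2}\to L^{r,2}$ obtained by real interpolation, followed by a one-dimensional convolution inequality in time) is the same mechanism the paper uses in Appendix~\ref{app:Lorentz-nonadmissible-Strichartz}, and Christ--Kiselev for $\widetilde q'<q$ together with Keel--Tao's bilinear argument for the double endpoint is the standard way to complete it.

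One passing remark needs care: real-interpolating the Lebesgue estimates through the Lions--Peetre identity $(L^{q_0}(L^{r_0}),L^{q_1}(L^{r_1}))_{\theta,q}=L^q(L^{r,q})$ gives only $L^qL^{r,q}$. That is weaker than $L^qL^{r,2}$ for $q>2$, so this alternative would need Cwikel's inclusion with second index $2$; your main argument does not depend on it.
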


We next state the non-admissible Strichartz estimate used in the small-data scattering argument.  Estimates of this type go back to the work of Cazenave and Weissler \cite{Cazenavewissler} and to Kato's framework for nonlinear Schr\"odinger equations \cite{Kato94}.
Related inhomogeneous estimates can be found in \cite{Foschi,Vilela,Taggart}.  The Lorentz-space form below is adapted to the singular spatial coefficient in the present paper.  Related Lorentz-type estimates also appear in the mass-subcritical scattering theory; see \cite{NakanishiOzawa,Masaki2015}.
Set
\[
2^{\ast}=
\begin{cases}
\infty, & N=1,2,\\[4pt]
\dfrac{2N}{N-2}, & N\ge3.
\end{cases}
\]

\begin{defi}[Lorentz acceptable pairs]\label{def:Lorentz-acceptable}
A pair $(\sigma,\rho)$ is called Lorentz acceptable if
\[
2<\rho<2^{\ast},
\qquad
0<\frac1\sigma<N\left(\frac12-\frac1\rho\right).
\]
For such a pair, we define $\widetilde\sigma$ by
\begin{equation}\label{eq:tilde-sigma-definition}
\frac1{\widetilde\sigma}+\frac1\sigma
=
N\left(\frac12-\frac1\rho\right).
\end{equation}
\end{defi}

\begin{prop}[Non-admissible Strichartz estimate in
Lorentz spaces]\label{prop:nonadmissible-strichartz}
Let $I\subset\R$ be an interval, and let $t_0\in\overline I$.  Let $(\sigma,\rho)$ be Lorentz acceptable, and let $\widetilde\sigma$ be defined by \eqref{eq:tilde-sigma-definition}.  Then, for every $1\le \nu_0\le \nu_1\le\infty$,
\begin{equation}\label{eq:Lorentz-nonadmissible-Strichartz}
 \left\|
 \int_{t_0}^{t}U(t-s)f(s)\,ds
 \right\|_{L^\sigma(I;L^{\rho,\nu_1})}
 \le
 C
 \|f\|_{L^{\widetilde\sigma'}(I;L^{\rho',\nu_0})}.
\end{equation}
The constant $C$ depends only on $N,\rho,\sigma,\nu_0$, and $\nu_1$, and is independent of $I$, $t_0$, and $f$.
\end{prop}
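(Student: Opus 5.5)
The plan is to follow the standard route to non-admissible inhomogeneous Strichartz estimates (Kato, Cazenave--Weissler, Foschi), but to carry it out entirely with the Lorentz-space version of the dispersive estimate and the Hardy--Littlewood--Sobolev inequality in time. The starting point is the dispersive estimate
\[
\|U(t)g\|_{L^{\rho,\nu}}\le C|t|^{-N(\frac12-\frac1\rho)}\|g\|_{L^{\rho',\nu}},
\]
valid for $2<\rho<\infty$ and $1\le\nu\le\infty$. It follows by real interpolation from the $L^1\to L^\infty$ bound $|t|^{-N/2}$ and the $L^2$ isometry. We then set
\[
\gamma:=N\Bigl(\frac12-\frac1\rho\Bigr)=\frac1\sigma+\frac1{\widetilde\sigma}.
\]
Since $(\sigma,\rho)$ is Lorentz acceptable, $0<\gamma<1$ (because $\rho<2^\ast$). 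Also $1/\sigma<\gamma$, so $\widetilde\sigma\in(0,\infty)$.

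The first step is the regime where the exponents allow Hardy--Littlewood--Sobolev directly. Applying the dispersive bound inside the Duhamel integral and using Minkowski gives
\[
\Bigl\|\int_{t_0}^t U(t-s)f(s)\,ds\Bigr\|_{L^{\rho,\nu_1}}\le C\int_I|t-s|^{-\gamma}\|f(s)\|_{L^{\rho',\nu_1}}\,ds.
\]
Here the Banach norms on Lorentz spaces, which exist since $1<\rho'<2$, are needed to pull the norm inside the integral. We then use $\nu_0\le\nu_1$ and the embedding $L^{\rho',\nu_0}\hookrightarrow L^{\rho',\nu_1}$. The one-dimensional Hardy--Littlewood--Sobolev inequality maps $L^{\widetilde\sigma'}(\R)\to L^\sigma(\R)$ precisely when $1/\widetilde\sigma'-1/\sigma=1-\gamma$, which is our relation, provided $1<\widetilde\sigma'<\sigma<\infty$. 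This covers the case $\widetilde\sigma>1$, i.e.\ $1/\sigma>\gamma-1$, together with $\sigma>1$. Extending $f$ by zero outside $I$ makes the constant independent of $I$ and $t_0$.

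The main obstacle is the remaining range, where $\sigma\le\widetilde\sigma'$ or $\widetilde\sigma\le1$, so HLS fails. Here I would use Foschi's argument in Lorentz form. First, the admissible estimates of Proposition~\ref{prop:admissible-strichartz} combined with the dispersive bound give, for dyadically time-separated pieces $|t-s|\sim2^j$, bilinear estimates of the form
\[
\Bigl|\iint_{|t-s|\sim2^j}\langle U(t-s)f(s),g(t)\rangle\,ds\,dt\Bigr|\lesssim 2^{j\beta(a,b)}\|f\|_{L^{\widetilde\sigma'}L^{a'}}\|g\|_{L^{\sigma'}L^{b'}}.
\]
Second, localizing $f$ and $g$ to time intervals of length $2^j$ gives the bound at the two endpoint exponents $\rho$ and $\rho$ varied slightly. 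Real (bilinear) interpolation in the spatial exponent then sums the dyadic series. A pleasant feature is that this interpolation naturally produces the Lorentz second index, since it lands in $L^{\rho,\nu}$ spaces. The acceptability condition $\rho<2^\ast$ together with $1/\sigma<\gamma$ is exactly what lies in Foschi's acceptable region with $\rho=\widetilde\rho$. Finally, duality gives the estimate in $L^\sigma(I;L^{\rho,\nu_1})$ from $L^{\widetilde\sigma'}(I;L^{\rho',\nu_0})$. Monotonicity of Lorentz spaces in the second index handles general $\nu_0\le\nu_1$, so it suffices to prove the case $\nu_0=1$, $\nu_1=\infty$ after interpolation.
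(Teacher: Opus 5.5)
Your first step is exactly the paper's proof, and it already proves the whole proposition. You use the Lorentz dispersive estimate obtained by real interpolation, Minkowski in a Banach norm on $L^{\rho,\nu_1}$, the embedding in the second Lorentz index, and the one-dimensional Hardy--Littlewood--Sobolev inequality in time. The paper writes the last step as O'Neil's inequality with $|\cdot|^{-\theta}\in L^{1/\theta,\infty}(\R)$, $\theta=\gamma$, followed by $L^{\sigma,\widetilde\sigma'}\hookrightarrow L^\sigma$.

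The ``remaining range'' you treat as the main obstacle is empty. You correctly noted $0<\gamma<1$. Then $1/\widetilde\sigma=\gamma-1/\sigma\in(0,\gamma)$ forces $1<1/\gamma<\widetilde\sigma<\infty$, not merely $\widetilde\sigma\in(0,\infty)$. Your own criterion $1/\sigma>\gamma-1$ holds trivially, since $\gamma-1<0$. Moreover, $1/\widetilde\sigma'-1/\sigma=1-\gamma>0$ gives $1<\widetilde\sigma'<\sigma<\infty$. So every Lorentz acceptable pair lies in the HLS range, and HLS applies with no further work.

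You should therefore delete the Foschi-type second half. As written it would not be a proof anyway. The dyadic bilinear bounds and the interpolation in the spatial exponent are only gestured at. The closing reduction is also reversed: by monotonicity, the case $\nu_0=1$, $\nu_1=\infty$ is the weakest estimate (smallest source space, largest target space). It is the diagonal cases $\nu_0=\nu_1$ that imply all the others. Your first step already handles every $1\le\nu_0\le\nu_1\le\infty$ directly through $L^{\rho',\nu_0}\hookrightarrow L^{\rho',\nu_1}$, so no such reduction is needed.
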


We give the proof of Proposition \ref{prop:nonadmissible-strichartz} in Appendix \ref{app:Lorentz-nonadmissible-Strichartz}.

\begin{rem}
Figure \ref{fig:Lorentz-acceptable-range} below illustrates the geometry of Definition \ref{def:Lorentz-acceptable}.  The point $P_{\rm adm}=(1/r_b,1/q_b)$ lies on the admissible line, whereas $P=(1/r_b,1/a_b)$ lies in the Lorentz acceptable region precisely in the range $p_{\mathrm{st}}(b)<p$ used in Theorem \ref{thm:small-s}.  In the proof of Theorem \ref{thm:small-s}, this corresponds to applying Proposition \ref{prop:nonadmissible-strichartz} with $(\sigma,\rho)=(a_b,r_b)$.
\end{rem}

\begin{figure}[!htbp]
\centering
\begin{tikzpicture}[x=10cm,y=5cm,>=stealth,
    point/.style={circle,fill=black,draw=black,inner sep=1.5pt},
    admpt/.style={circle,fill=white,draw=black,line width=0.45pt,inner sep=1.7pt},
    labelbox/.style={fill=white,inner sep=1.2pt}]
    \footnotesize
    \def\xL{0.25}
    \def\xM{0.50}
    \def\xP{0.37}
    \def\yAdm{0.26}
    \def\yP{0.40}
    \def\yTop{0.52}

    \draw[->] (-0.02,0) -- (0.58,0) node[right] {$1/\rho$};
    \draw[->] (0,-0.03) -- (0,1.05) node[above] {$1/\sigma$};

    \draw[dashed,black!35] (0,1) -- (0.55,1);
    \draw[dashed,black!35] (0.5,0) -- (0.5,1);
    \draw[dashed,black!35] (0,0.5) -- (0.55,0.5);

    \fill[black!10] (\xL,0) -- (\xM,0) -- (\xL,1) -- cycle;
    \draw[line width=0.65pt,dashed,black!70] (\xL,1) -- (\xM,0);
    \node[labelbox,text=black!70,anchor=west] (upperlab) at (0.385,0.82) {upper boundary};
    \draw[->,shorten >=2pt,black!70] (upperlab.west) -- (0.335,0.66);

    \draw[line width=0.9pt,black] (\xL,0.5) -- (\xM,0);
    \node[labelbox,text=black,anchor=east] (admlab) at (0.235,0.64) {admissible line};
    \draw[->,shorten >=2pt,black] (admlab.south east) -- (0.300,0.40);

    \draw[densely dotted,black!55] (\xP,0) -- (\xP,\yTop);
    \draw[densely dotted,black!40] (0,\yAdm) -- (\xP,\yAdm);
    \draw[densely dotted,black!40] (0,\yP) -- (\xP,\yP);

    \node[admpt] at (\xP,\yAdm) {};
    \node[point] at (\xP,\yP) {};

    \node[labelbox,anchor=west] (Plab) at (0.435,0.43) {$P$};
    \draw[->,shorten >=2pt] (Plab.west) -- (\xP,\yP);
    \node[labelbox,anchor=west] (Padmlab) at (0.435,0.30) {$P_{\rm adm}$};
    \draw[->,shorten >=2pt] (Padmlab.west) -- (\xP,\yAdm);

    \node[labelbox,align=center] at (0.345,0.13) {Lorentz\\acceptable};

    \node[below left] at (0,0) {$0$};
    \node[below] at (\xL,0) {$\frac12-\frac1N$};
    \node[below] at (0.5,0) {$\frac12$};
    \node[left] at (0,0.5) {$\frac12$};
    \node[left] at (0,1) {$1$};
\end{tikzpicture}
\caption{The Lorentz acceptable region.  Here
$P_{\rm adm}=(1/r_b,1/q_b)$ is the admissible point, while
$P=(1/r_b,1/a_b)$ is the non-admissible point used in the small-data scattering argument. The figure is drawn for $N\ge3$.}
\label{fig:Lorentz-acceptable-range}
\end{figure}
\FloatBarrier

\begin{rem}
If $(q,\rho)$ is admissible and $\sigma>q/2$, then
\[
 \frac1{\widetilde\sigma}+\frac1\sigma=\frac2q.
\]
Thus Proposition \ref{prop:nonadmissible-strichartz} recovers the non-admissible Strichartz estimate established by Aloui and the third author \cite{tayachi2}.  In the proof of Theorem \ref{thm:small-s}, we apply it with $(\sigma,\rho)=(a_b,r_b)$.
\end{rem}

\section{Small-data scattering in the complementary range}\label{sec:scattering}

In this section we prove Theorem \ref{thm:small-s} and Corollary \ref{cor:FcalH1-small-scattering}. Regarding Theorem \ref{thm:small-s}, that is (INLS) with $(A1)$ and $p>p_{\mathrm{st}}(b)$ it is worth noting that the approach developed in \cite{tayachi2} does not rely on any particular structure of the nonlinearity. In the super-Strauss regime, small-data scattering solutions to (INLS) can be constructed by means of the contraction mapping principle, using the non-admissible Strichartz estimates. For completeness, we provide the details below.
The proof is based on a fixed point argument in a scale-invariant spacetime norm, controlled by the non-admissible Strichartz estimate in Lorentz spaces.
The smallness condition is imposed on the free evolution, in the spirit of the approach of Cazenave and Weissler \cite{Cazenavewissler} and Kato's approach \cite{Kato94}.

Fix $b$ satisfying \eqref{bchoice}, and let $q_b$, $r_b$, and $a_b$ be defined by \eqref{main-scatter-exponents}.  Then $(q_b,r_b)$ is an admissible pair and
\[
 \frac1{r_b'}=\frac bN+\frac p{r_b},
 \qquad
 \frac1{q_b'}=\frac{p-1}{a_b}+\frac1{q_b}.
\]
Define $\widetilde a_b$ by
\[
 \frac1{\widetilde a_b}+\frac1{a_b}= \frac{2}{q_b}.
\]
Then
\[
 \widetilde a_b'=\frac{a_b}{p}.
\]
Moreover, under \eqref{bchoice}, the condition $p_{\mathrm{st}}(b)<p$ is equivalent to the Lorentz acceptability of $(a_b,r_b)$ in the sense of Definition \ref{def:Lorentz-acceptable}; equivalently,
\[
 a_b>\frac{q_b}{2}.
\]
Indeed,
\[
 a_b>\frac{q_b}{2}
 \quad\Longleftrightarrow\quad
 N(p-1)^2+(N-2+2b)(p-1)-4+2b>0,
\]
which is equivalent to $p>p_{\mathrm{st}}(b)$.
We shall also use $a_b>p$.

For an interval $I\subset\R$, define
\[
 S(I):=L^{a_b}(I;L^{r_b,\infty}),
 \qquad
 \|u\|_{S(I)}:=\|u\|_{L^{a_b}(I;L^{r_b,\infty})},
\]
and
\[
 W(I):=C(I;L^2)\cap L^{q_b}(I;L^{r_b,2}),
 \qquad
 \|u\|_{W(I)}
 :=
 \|u\|_{L^\infty(I;L^2)}
 +
 \|u\|_{L^{q_b}(I;L^{r_b,2})}.
\]
By the convention fixed in Section \ref{sec:pre}, the spaces $S(I)$ and $W(I)$ are Banach spaces.
For $u$ defined on an interval containing $0$, write
\[
 \mathcal N(u)(t)
 :=
 -i\int_0^t U(t-s)K F(u(s))\,ds.
\]

We first state the nonlinear estimates used in the fixed point argument.

\begin{lem}[Nonlinear estimates in the scattering space]
Assume \eqref{Kup}, \eqref{A1}, and \eqref{bchoice}.  Let $I\subset\R$ be an interval containing $0$.  Then the following estimates hold.
\begin{enumerate}
\item If $u,v\in S(I)$, then
\begin{align}
\|\mathcal N(u)\|_{S(I)}
&\le C\|u\|_{S(I)}^p,\label{S-self-est}\\
\|\mathcal N(u)-\mathcal N(v)\|_{S(I)}
&\le C
\left(\|u\|_{S(I)}^{p-1}+\|v\|_{S(I)}^{p-1}\right)
\|u-v\|_{S(I)}.\label{S-diff-est}
\end{align}

\item If $u,v\in S(I)\cap W(I)$, then
\begin{align}
\|\mathcal N(u)\|_{W(I)}
&\le C\|u\|_{S(I)}^{p-1}\|u\|_{W(I)},\label{W-self-est}\\
\|\mathcal N(u)-\mathcal N(v)\|_{W(I)}
&\le C
\left(\|u\|_{S(I)}^{p-1}+\|v\|_{S(I)}^{p-1}\right)
\|u-v\|_{W(I)}.\label{W-diff-est}
\end{align}
\end{enumerate}
\end{lem}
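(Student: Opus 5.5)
The plan is to reduce everything to one pointwise-in-time estimate on the nonlinearity in the dual Lorentz space $L^{r_b',1}$. Then I apply Proposition \ref{prop:nonadmissible-strichartz} (for the $S$-estimates) or Proposition \ref{prop:admissible-strichartz} (for the $W$-estimates), followed by H\"older in time.

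\textbf{Spatial estimate.} By \eqref{Kup} and the choice $b_0\le b\le b_\infty$, we have $|K(x)|\lesssim |x|^{-b}$, and $|x|^{-b}\in L^{N/b,\infty}$ (when $b=0$, $K\in L^\infty$ and the same argument works with the $L^\infty$ factor). By \eqref{A1}, $|F(u)-F(v)|\lesssim(|u|+|v|)^{p-1}|u-v|$. Since $1/r_b'=b/N+p/r_b$, the generalized H\"older inequality in Lorentz spaces applies with $p-1$ factors in $L^{r_b,\infty}$. It gives
\[
\|K(F(u)-F(v))\|_{L^{r_b',1}}\lesssim \bigl(\|u\|_{L^{r_b,\infty}}+\|v\|_{L^{r_b,\infty}}\bigr)^{p-1}\|u-v\|_{L^{r_b,\nu}},
\]
for $\nu=\infty$ and also for $\nu=2$. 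Here I must take care with the second Lorentz index. The $p+1$ factors have second indices $\infty,\dots,\infty,\nu$, so the exponent condition $1/q\le\sum 1/q_i$ only yields $L^{r_b',\infty}$ when $\nu=\infty$, and $L^{r_b',2}$ when $\nu=2$. This is the main obstacle. It is exactly why Proposition \ref{prop:nonadmissible-strichartz} is stated with $\nu_0\le\nu_1$ arbitrary, and why $S$ uses $L^{r_b,\infty}$. So for the $S$-estimates I will apply that proposition with $\nu_0=\nu_1=\infty$. The $W$-estimates rest on $L^{r_b',2}$, and I address their admissibility below. The case $p<2$ (nonintegral number of factors) is handled by writing $(|u|+|v|)^{p-1}\in L^{r_b/(p-1),\infty}$, with norm equal to $\|\,|u|+|v|\,\|_{L^{r_b,\infty}}^{p-1}$ up to constants.

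\textbf{Time estimates.} For (1), I apply Proposition \ref{prop:nonadmissible-strichartz} with $(\sigma,\rho)=(a_b,r_b)$, which is Lorentz acceptable since $p>p_{\mathrm{st}}(b)$ (checked just before the lemma). I take $t_0=0$, $\nu_0=\nu_1=\infty$. This bounds $\|\mathcal N(u)-\mathcal N(v)\|_{S(I)}$ by $\|K(F(u)-F(v))\|_{L^{\widetilde a_b'}(I;L^{r_b',\infty})}$. Since $\widetilde a_b'=a_b/p$, H\"older in time with $p$ factors in $L^{a_b}$ gives \eqref{S-diff-est}. Taking $v=0$ gives \eqref{S-self-est}, because $F(0)=0$. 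Here $a_b>p$ guarantees $a_b/p\ge 1$.

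For (2), I apply the inhomogeneous estimate in Proposition \ref{prop:admissible-strichartz} with target pair $(q,r)=(\infty,2)$ and $(q_b,r_b)$, and dual pair $(q_b,r_b)$. This gives $\|\mathcal N(u)-\mathcal N(v)\|_{W(I)}\lesssim\|K(F(u)-F(v))\|_{L^{q_b'}(I;L^{r_b',2})}$. The spatial estimate with $\nu=2$ controls the integrand by $(\cdots)^{p-1}\|u-v\|_{L^{r_b,2}}$. Then $1/q_b'=(p-1)/a_b+1/q_b$ and H\"older in time give \eqref{W-diff-est} and, with $v=0$, \eqref{W-self-est}. To land in $L^{r_b',2}$ I use $1/2\le 1/2+(p-1)\cdot 0$, which is allowed. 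Finally, continuity in time with values in $L^2$ follows from the standard Strichartz argument.
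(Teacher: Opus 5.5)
Your argument is correct and is essentially the paper's proof. You reduce via Proposition~\ref{prop:nonadmissible-strichartz} with $(\sigma,\rho)=(a_b,r_b)$, $\nu_0=\nu_1=\infty$, $\widetilde a_b'=a_b/p$, and via the admissible estimate with dual pair $(q_b,r_b)$, then close with the Lorentz H\"older inequality into $L^{r_b',\infty}$ and $L^{r_b',2}$ respectively, followed by H\"older in time.

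You should, however, delete your opening plan and the first display targeting $L^{r_b',1}$, because that bound is false. For $K=|x|^{-b}$, $F(z)=|z|^p$ and $u=|x|^{-N/r_b}\in L^{r_b,\infty}$, one gets $K|u|^p=|x|^{-N/r_b'}\notin L^{r_b',1}$. You correctly retract this yourself in the next sentences.
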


\begin{proof}
Since $b_0\le b\le b_\infty$, the coefficient bound \eqref{Kup} implies
\[
 |K(x)|\lesssim |x|^{-b}.
\]
Using \eqref{A1} and the generalized H\"older inequality in Lorentz spaces, we obtain
\begin{align*}
 \|K F(u)\|_{L^{r_b',\infty}}
 &\lesssim
 \|u\|_{L^{r_b,\infty}}^p,\\
 \|K(F(u)-F(v))\|_{L^{r_b',\infty}}
 &\lesssim
 \left(
 \|u\|_{L^{r_b,\infty}}^{p-1}
 +\|v\|_{L^{r_b,\infty}}^{p-1}
 \right)
 \|u-v\|_{L^{r_b,\infty}}.
\end{align*}
Together with $\widetilde a_b'=a_b/p$, this gives
\[
 \|K F(u)\|_{L^{\widetilde a_b'}(I;L^{r_b',\infty})}
 \lesssim
 \|u\|_{S(I)}^p
\]
and the corresponding difference estimate.  Proposition \ref{prop:nonadmissible-strichartz}, applied with
\[
(\sigma,\rho)=(a_b,r_b),
\qquad
\nu_0=\nu_1=\infty,
\qquad
\widetilde\sigma=\widetilde a_b,
\]
yields \eqref{S-self-est} and \eqref{S-diff-est}.

We next prove the $W(I)$ estimates.  By \eqref{A1}, the generalized H\"older inequality, and
\[
 \frac1{q_b'}=\frac{p-1}{a_b}+\frac1{q_b},
\]
we have
\begin{align*}
 \|K F(u)\|_{L^{q_b'}(I;L^{r_b',2})}
 &\lesssim
 \|u\|_{S(I)}^{p-1}
 \|u\|_{L^{q_b}(I;L^{r_b,2})},\\
 \|K(F(u)-F(v))\|_{L^{q_b'}(I;L^{r_b',2})}
 &\lesssim
 \left(
 \|u\|_{S(I)}^{p-1}
 +\|v\|_{S(I)}^{p-1}
 \right)
 \|u-v\|_{L^{q_b}(I;L^{r_b,2})}.
\end{align*}
The admissible Strichartz estimate then gives the $L^\infty(I;L^2)$ and $L^{q_b}(I;L^{r_b,2})$ bounds in \eqref{W-self-est} and \eqref{W-diff-est}.  The $L^2$-continuity of the Duhamel term follows from the standard strong-continuity argument, since the nonlinear term belongs to $L^{q_b'}(I;L^{r_b',2})$.
\end{proof}

We now construct the solution by a fixed point argument.

\begin{prop}[Solution under smallness of the free evolution]\label{prop:small-free-fixed-point}
Assume \eqref{Kup}, \eqref{A1}, and \eqref{bchoice}.  There exists $\varepsilon_0>0$ with the following property. Let $I$ be one of $[0,\infty)$, $(-\infty,0]$, and $\R$.  If $u_0\in L^2$ satisfies
\[
 \|U(t)u_0\|_{S(I)}\le \varepsilon_0,
\]
then the Cauchy problem \eqref{11.1} has a unique solution
\[
 u\in C(I;L^2)\cap L^{q_b}(I;L^{r_b,2}) \cap L^{a_b}(I;L^{r_b,\infty})
\]
Moreover,
\[
 \|u\|_{S(I)}
 \le
 2\|U(t)u_0\|_{S(I)}.
\]
\end{prop}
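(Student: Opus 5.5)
We construct the solution by the contraction mapping principle in the space $S(I)$ alone, and only afterwards upgrade it to $W(I)$. Let $\varepsilon_0>0$ be small, to be fixed, and put $\eta:=\|U(t)u_0\|_{S(I)}\le\varepsilon_0$. We work on the closed set
\[
 X:=\{u\in S(I):\|u\|_{S(I)}\le 2\eta\},
\]
with metric $d(u,v)=\|u-v\|_{S(I)}$. The map is $\Phi(u):=U(t)u_0+\mathcal N(u)$. Since $S(I)$ is a Banach space under the fixed Banach norm on $L^{r_b,\infty}$, the space $X$ is complete.

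By \eqref{S-self-est},
\[
 \|\Phi(u)\|_{S(I)}\le \eta+C(2\eta)^p\le 2\eta
\]
once $C2^p\varepsilon_0^{p-1}\le1$. By \eqref{S-diff-est},
\[
 \|\Phi(u)-\Phi(v)\|_{S(I)}\le 2C(2\eta)^{p-1}\|u-v\|_{S(I)}\le\tfrac12\|u-v\|_{S(I)}
\]
after possibly shrinking $\varepsilon_0$. Banach's fixed point theorem then gives a unique $u\in X$ with $u=\Phi(u)$, and this $u$ satisfies $\|u\|_{S(I)}\le2\|U(t)u_0\|_{S(I)}$. The constant $C$ in Proposition \ref{prop:nonadmissible-strichartz} does not depend on $I$ or $t_0$, so one $\varepsilon_0$ works for all three choices of $I$.

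Next we show $u\in W(I)$. This is the main obstacle, because \eqref{W-self-est} presupposes $u\in S(I)\cap W(I)$. We resolve it by running the Picard iteration inside $S(I)\cap W(I)$. Set $u^{(0)}=U(t)u_0$, which lies in $W(I)$ by Proposition \ref{prop:admissible-strichartz} with $\|u^{(0)}\|_{W(I)}\le C_0\|u_0\|_{L^2}$. Define $u^{(n+1)}=\Phi(u^{(n)})$. The iterates stay in $X$. By \eqref{W-self-est},
\[
 \|u^{(n+1)}\|_{W(I)}\le C_0\|u_0\|_{L^2}+C(2\varepsilon_0)^{p-1}\|u^{(n)}\|_{W(I)},
\]
so the $W$-norms are uniformly bounded once $C(2\varepsilon_0)^{p-1}\le1/2$. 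By \eqref{W-diff-est}, the sequence is Cauchy in $W(I)$ with ratio $\tfrac12$. It is also Cauchy in $S(I)$ by the contraction estimate. Its limits in the two spaces coincide as distributions, so $u\in W(I)$. The $L^2$-continuity of $u$ comes from the continuity statement in the $W$ estimate of the nonlinear lemma. The integral equation holds in $L^2$ for each $t$ by passing to the limit using the admissible Strichartz bound on $\mathcal N(u^{(n)})-\mathcal N(u)$.

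Finally, uniqueness is understood in the class $C(I;L^2)\cap L^{q_b}(I;L^{r_b,2})\cap L^{a_b}(I;L^{r_b,\infty})$. Let $v$ be another solution in this class. We first consider a short interval $J=[0,T]$ on which $\|v\|_{S(J)}$ and $\|u\|_{S(J)}$ are small; this is possible by absolute continuity of the $L^{a_b}$ time integral, since $a_b<\infty$. Then \eqref{S-diff-est} on $J$ gives
\[
 \|u-v\|_{S(J)}\le\tfrac12\|u-v\|_{S(J)},
\]
so $u=v$ on $J$. A standard continuation argument, restarting at $T$ with the Duhamel formula centered at $T$, extends the equality to all of $I$. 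Restarting is legitimate because Proposition \ref{prop:nonadmissible-strichartz} allows arbitrary $t_0$.
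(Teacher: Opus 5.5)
Your proof is correct and uses the same ingredients as the paper: the $S(I)$ and $W(I)$ nonlinear estimates, the same smallness conditions on $\varepsilon_0$, and uniqueness from small $S$-norm on short subintervals with the Duhamel formula restarted at the left endpoint. The only difference is organizational. The paper runs a single contraction on $\{u\in S(I)\cap W(I):\|u\|_{S(I)}\le 2\varepsilon,\ \|u\|_{W(I)}\le 2C_0\|u_0\|_{L^2}\}$ with metric $\|u-v\|_{S(I)}+\|u-v\|_{W(I)}$, whereas you contract in the $S(I)$-ball alone and then recover the $W(I)$ bound by Picard iteration; this is equivalent, and as a by-product it gives uniqueness of the fixed point in the $S(I)$-ball with no $W$ constraint.
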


\begin{proof}
We prove the assertion for $I=[0,\infty)$.  The proof on $(-\infty,0]$ is identical.

Let $C_0>0$ be such that
\[
 \|U(t)u_0\|_{W(I)}\le C_0\|u_0\|_{L^2},
\]
which follows from Proposition \ref{prop:admissible-strichartz}.  Set
\[
 M:=2C_0\|u_0\|_{L^2}
\]
and define
\[
 X_{I,\varepsilon,M}
 :=
 \left\{
 u\in S(I)\cap W(I):
 \|u\|_{S(I)}\le 2\varepsilon,
 \quad
 \|u\|_{W(I)}\le M
 \right\}, \quad
 \varepsilon := \|U(t)u_0\|_{S(I)}.
\]
We equip $X_{I,\varepsilon,M}$ with the metric
\[
 d(u,v):=\|u-v\|_{S(I)}+\|u-v\|_{W(I)}.
\]
Then $X_{I,\varepsilon,M}$ is complete.  Define
\[
 \Phi[u](t)
 :=
 U(t)u_0-i\int_0^t U(t-s)K F(u(s))\,ds.
\]
We show that $\Phi$ is a contraction on $X_{I,\varepsilon,M}$ if $\varepsilon_0$ is sufficiently small.

Let $u\in X_{I,\varepsilon,M}$.  By \eqref{S-self-est},
\[
 \|\Phi[u]\|_{S(I)}
 \le
 \|U(t)u_0\|_{S(I)}
 +C\|u\|_{S(I)}^p
 \le
 \varepsilon+C(2\varepsilon)^p.
\]
Choose $\varepsilon_0>0$ so small that
\[
 C2^p\varepsilon_0^{p-1}\le1.
\]
Then $\|\Phi[u]\|_{S(I)}\le2\varepsilon$.  Similarly, by \eqref{W-self-est},
\[
 \|\Phi[u]\|_{W(I)}
 \le
 C_0\|u_0\|_{L^2}
 +C\|u\|_{S(I)}^{p-1}\|u\|_{W(I)}
 \le
 C_0\|u_0\|_{L^2}
 +C(2\varepsilon)^{p-1}M.
\]
Taking $\varepsilon_0$ smaller if necessary, we may assume
\[
 C(2\varepsilon_0)^{p-1}\le\frac12.
\]
Then
\[
 \|\Phi[u]\|_{W(I)}
 \le
 C_0\|u_0\|_{L^2}+\frac12M
 =
 M.
\]
Thus $\Phi$ maps $X_{I,\varepsilon,M}$ into itself.

For $u,v\in X_{I,\varepsilon,M}$, \eqref{S-diff-est} and \eqref{W-diff-est} give
\begin{align*}
 d(\Phi[u],\Phi[v])
 &\le
 C
 \left(
 \|u\|_{S(I)}^{p-1}
 +\|v\|_{S(I)}^{p-1}
 \right)
 d(u,v)\\
 &\le
 2C(2\varepsilon_0)^{p-1}d(u,v).
\end{align*}
Choosing $\varepsilon_0$ even smaller so that
\[
 2C(2\varepsilon_0)^{p-1}\le\frac12,
\]
we obtain
\[
 d(\Phi[u],\Phi[v])\le\frac12d(u,v).
\]
The contraction mapping theorem gives a fixed point $u\in X_{I,\varepsilon,M}$.
This fixed point satisfies the integral equation on $I$ and
\[
 \|u\|_{S(I)}
 \le
 2\varepsilon
 =
 2\|U(t)u_0\|_{S(I)}.
\]

It remains to prove uniqueness in $S(I)\cap W(I)$.
Let $u,v\in S(I)\cap W(I)$ be two solutions to \eqref{11.1} with the same initial data.
Since
\[
 \|u\|_{S(I)}+\|v\|_{S(I)}<\infty,
\]
we may decompose $I$ into finitely many consecutive intervals $I_j$ such that
\[
 C
 \left(
 \|u\|_{S(I_j)}^{p-1}
 +
 \|v\|_{S(I_j)}^{p-1}
 \right)
 \le \frac12
\]
for every $j$.
On the first interval, the difference estimates give
\[
 \|u-v\|_{S(I_1)}+\|u-v\|_{W(I_1)}
 \le
 \frac12
 \left(
 \|u-v\|_{S(I_1)}+\|u-v\|_{W(I_1)}
 \right),
\]
and hence $u=v$ on $I_1$.
Repeating the same argument,
we obtain $u=v$ on each $I_j$.
Thus $u=v$ on $I$.
This completes the proof.
\end{proof}

We now prove the scattering for small free evolution.

\begin{proof}[Proof of Theorem \ref{thm:small-s}]
We prove the assertion for $I=[0,\infty)$.  The case $I=(-\infty,0]$ is treated in the same way.
By Proposition \ref{prop:small-free-fixed-point}, the solution is global on $I$ and satisfies
\[
 u\in S(I)\cap W(I).
\]
By the nonlinear estimate and the admissible Strichartz estimate on compact subintervals of $I$, $u$ is a solution to \eqref{11.1} in the sense of Definition \ref{def:solution}.

For $0<t_1<t_2$, the integral equation gives
\[
 U(-t_2)u(t_2)-U(-t_1)u(t_1)
 =
 -i\int_{t_1}^{t_2}U(-s)K F(u(s))\,ds.
\]
By Proposition \ref{prop:admissible-strichartz} and the estimate used in the proof of \eqref{W-self-est},
\begin{align*}
 \|U(-t_2)u(t_2)-U(-t_1)u(t_1)\|_{L^2}
 &\le
 C\|K F(u)\|_{L^{q_b'}(t_1,t_2;L^{r_b',2})}\\
 &\le
 C\|u\|_{S((t_1,t_2))}^{p-1}
 \|u\|_{L^{q_b}(t_1,t_2;L^{r_b,2})}.
\end{align*}
Since $u\in S(I)\cap L^{q_b}(I;L^{r_b,2})$,
the right-hand side tends to zero as $t_1,t_2\to\infty$.  Hence $U(-t)u(t)$ is Cauchy in $L^2$ as $t\to\infty$.  Therefore, there exists $u_+\in L^2$ such that
\[
 U(-t)u(t)\to u_+
 \quad\text{in }L^2
 \quad\text{as }t\to\infty.
\]
Since $U(t)$ is unitary on $L^2$, we conclude that $u(t) \rightarrow U(t)u_+$ in $L^2$ as $t \rightarrow \infty$.
The same argument on $(-\infty,0]$ gives the backward scattering assertion.

Assume
\[
 \|U(t)u_0\|_{L^{a_b}(\R;L^{r_b,\infty})}\le\varepsilon_0.
\]
Arguing as above and applying Proposition \ref{prop:small-free-fixed-point} with $I=\R$, we obtain a global solution on $\R$ in the sense of Definition \ref{def:solution}.  The preceding Cauchy argument on the two half-lines then yields scattering in $L^2$ as $t\to\pm\infty$.
This completes the proof.
\end{proof}

It remains to show that small data in $\mathcal{F}H^1$ satisfy the smallness condition for the free evolution in Theorem \ref{thm:small-s}.

\begin{lem}\label{lem:linear-FcalH1}
Let $b$ satisfy \eqref{bchoice}.  Then
\[
 \|U(t)u_0\|_{L^{a_b}(\R;L^{r_b,\infty})}
 \le C\|u_0\|_{\mathcal{F}H^1}
\]
for every $u_0\in \mathcal{F}H^1$.
\end{lem}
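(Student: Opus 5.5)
We have to prove $\|U(t)u_0\|_{L^{a_b}(\R;L^{r_b,\infty})}\lesssim \|u_0\|_{\mathcal{F}H^1}$. The plan is to combine two pointwise-in-time bounds for the free evolution and then check integrability in $t$.

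\emph{Large times.} We use the dispersive estimate in Lorentz spaces, which follows from the $L^1\to L^\infty$ and $L^2\to L^2$ bounds by real interpolation:
\[
\|U(t)u_0\|_{L^{r_b,\infty}}\le \|U(t)u_0\|_{L^{r_b,2}}\lesssim |t|^{-N(\frac12-\frac1{r_b})}\|u_0\|_{L^{r_b',2}}.
\]
The right-hand side is controlled by the weighted norm, since
\[
\|u_0\|_{L^{r_b',2}}\le\|\langle x\rangle^{-1}\|_{L^{m,\infty}}\|\langle x\rangle u_0\|_{L^2}, \qquad \frac1m=\frac1{r_b'}-\frac12=\frac12-\frac1{r_b}.
\]
We need $\langle x\rangle^{-1}\in L^{m,\infty}$, which requires $m\ge N$, i.e.\ $\frac12-\frac1{r_b}\le \frac1N$; this holds because $r_b<2^\ast$. (If $N=1,2$, $\langle x\rangle^{-1}\in L^{m}$ for any $m>N$, and the endpoint is not needed since $r_b<\infty$.) Alternatively, one can use $U(t)=M(t)D(t)\mathcal F M(t)$: this gives $|t|^{-N(1/2-1/r_b)}\|\widehat{M u_0}\|_{L^{r_b,\infty}}$, and $\|\widehat{Mu_0}\|_{L^{r_b,2}}\lesssim\|Mu_0\|_{\mathcal FH^1}$ by the Sobolev--Lorentz embedding $H^1\hookrightarrow L^{r_b,2}$.

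\emph{Small times.} We use $\|U(t)u_0\|_{L^{r_b,\infty}}\lesssim \|U(t)u_0\|_{H^1}$; this requires $\frac12-\frac1N\le \frac1{r_b}$, which again is $r_b\le 2^\ast$. The $H^1$ norm, however, is not controlled by $\mathcal FH^1$. So instead we use $U(t)=M(t)D(t)\mathcal F M(t)$ for all $t$ together with $|M(t)|=1$. The step to be careful about is the uniformity in $t$ of $\|\widehat{M(t)u_0}\|_{H^1}=\|\langle x\rangle M(t)u_0\|_{L^2}=\|u_0\|_{\mathcal FH^1}$, and indeed it holds exactly. Hence, for all $t\neq0$,
\[
\|U(t)u_0\|_{L^{r_b,\infty}}\lesssim |t|^{-\gamma}\|u_0\|_{\mathcal FH^1},\qquad \gamma=N\Bigl(\frac12-\frac1{r_b}\Bigr)=\frac{2}{q_b}.
\]
For $|t|\le1$ the interpolation bound $\|U(t)u_0\|_{L^{r_b,\infty}}\lesssim\|u_0\|_{H^{\gamma}}$ is not available. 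We therefore use instead the alternative decay representation near $t=0$: the Strichartz estimate with $L^2$ data, $\|U(t)u_0\|_{L^{q_b}(\R;L^{r_b,2})}\lesssim\|u_0\|_{L^2}$, together with Hölder on $[-1,1]$, which needs $a_b\le q_b$. But $a_b>q_b/2$ only, so Hölder fails if $a_b>q_b$.

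The main obstacle is therefore to show that $t\mapsto |t|^{-\gamma}$ lies in $L^{a_b}$ near infinity, i.e.\ $a_b\gamma>1$, equivalently $a_b>q_b/2$. This is exactly the Lorentz acceptability of $(a_b,r_b)$, which is equivalent to $p>p_{\mathrm{st}}(b)$. Near zero the power $|t|^{-\gamma}$ is not $L^{a_b}$-integrable when $a_b\gamma>1$, so on $|t|\le1$ we combine the two bounds by interpolation. We interpolate (Hölder in the Lorentz index) between the Strichartz bound, valid in $L^{q_b}_t$, and the $L^\infty$-in-time bound coming from the $H^1$ route when $N\le 2$. In general we use the weighted bound $\|U(t)u_0\|_{L^{r_b,2}}\lesssim\|u_0\|_{\mathcal FH^1}$ for $|t|\le1$, obtained from $\|x\,U(-t)\cdots\|$ and the identity $xU(t)=U(t)(x+it\nabla)$. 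Since $\mathcal FH^1$ gives no derivative control, the fallback is: if $a_b\le q_b$, Hölder on $[-1,1]$ suffices. We plan to verify $a_b\le q_b$ from $p\le p_0(b)$; this appears to hold since $a_b=q_b$ at the $L^2$-critical exponent and $a_b$ increases with $p$.
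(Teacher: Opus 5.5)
Your final argument is correct and is essentially the paper's proof. On $|t|\le1$ you use Strichartz for $(q_b,r_b)$ plus H\"older with $a_b\le q_b$; on $|t|\ge1$ you use the decay $|t|^{-2/q_b}\|u_0\|_{\mathcal{F}H^1}$, obtained from the dispersive estimate and weighted H\"older, which lies in $L^{a_b}$ exactly when $p>p_{\mathrm{st}}(b)$. The paper does the same, using the plain $L^{r_b'}\to L^{r_b}$ dispersive estimate rather than your Lorentz version.

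The check you left open, $a_b\le q_b$, does hold: it reduces to $N\bigl(p-1-\frac{4-2b}{N}\bigr)(p+1)\le0$, i.e.\ $p\le p_0(b)$. You should, however, delete the abandoned detours. In particular, the claimed uniform bound $\|U(t)u_0\|_{L^{r_b,2}}\lesssim\|u_0\|_{\mathcal{F}H^1}$ for $|t|\le1$ is false, since at $t=0$ it would control $\|u_0\|_{L^{r_b,2}}$ by a weighted $L^2$ norm; your final argument does not use it.
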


\begin{proof}
On $|t|\le1$, the admissible Strichartz estimate for $(q_b,r_b)$, the embedding $L^{r_b,2}\hookrightarrow L^{r_b,\infty}$, and $a_b\le q_b$ yield
\[
 \|U(t)u_0\|_{L^{a_b}((-1,1);L^{r_b,\infty})}
 \lesssim
 \|U(t)u_0\|_{L^{q_b}((-1,1);L^{r_b,2})}
 \lesssim
 \|u_0\|_{L^2}
 \le
 \|u_0\|_{\mathcal{F}H^1}.
\]
Here $a_b\le q_b$ follows from $p\le p_0(b)$.

On $|t|\ge1$, the dispersive estimate gives
\[
 \|U(t)u_0\|_{L^{r_b}}
 \lesssim
 |t|^{-\delta}\|u_0\|_{L^{r_b'}},
 \qquad
 \delta
 =
 N\left(\frac12-\frac1{r_b}\right)
 =
 \frac{N(p-1)+2b}{2(p+1)}.
\]
We have $r_b'<2$ since $p>1$ and $b\ge0$.  When $N\ge3$, the assumption $p\le p_0(b)$ implies $r_b'>2N/(N+2)$.
For $N=1,2$, $r_b'>1$ and $2N/(N+2)\le1$.
It then follows from H\"older's inequality that
\[
 \|f\|_{L^{r_{b}'}}
 \le
 \|\langle x\rangle f\|_{L^2}
 \|\langle x\rangle^{-1}\|_{L^{q}}
 \le
 C\|f\|_{\mathcal{F}H^1}, \qquad
  \frac{1}{q} := \frac{1}{r_{b}'} - \frac{1}{2}.
\]
Therefore, since $L^{r_b}\hookrightarrow L^{r_b,\infty}$,
\[
 \|U(t)u_0\|_{L^{a_b}(\{|t|\ge1\};L^{r_b,\infty})}
 \lesssim
 \|u_0\|_{\mathcal{F}H^1}
 \left(\int_1^\infty t^{-a_b\delta}\,dt\right)^{1/a_b}.
\]
The condition $a_b\delta>1$ is equivalent to $p>p_{\mathrm{st}}(b)$.  Hence the last integral is finite.  Combining the estimates on $|t|\le1$ and $|t|\ge1$ proves the desired assertion.
\end{proof}

\begin{proof}[Proof of Corollary \ref{cor:FcalH1-small-scattering}]
By Lemma \ref{lem:linear-FcalH1},
\[
 \|U(t)u_0\|_{L^{a_b}(\R;L^{r_b,\infty})}
 \le
 C\|u_0\|_{\mathcal{F}H^1}.
\]
Choose $\varepsilon_1=\varepsilon_0/C$, where $\varepsilon_0$ is the constant in Theorem \ref{thm:small-s}.  If
\[
 \|u_0\|_{\mathcal{F}H^1}\le\varepsilon_1,
\]
then the smallness condition for the free evolution in Theorem \ref{thm:small-s} holds on $\R$.  Therefore, a global solution exists and scatters in both time directions.
\end{proof}

\section{Proof of Theorem \ref{but}}\label{sec:pfm}

In this section we prove Theorem \ref{but}.  The proof is based on the test-function method developed in \cite{ikedaSobajima,ikedaSobajima2,miyazaki1,miyazakinew}, originating from Zhang's method \cite{zhang1,zhang2}.
More specifically, we follow the unified test-function framework for the case $K\equiv1$ developed by the second author and Sobajima \cite{miyazakinew}.
In the present two-scale setting, the model weight $|x|^{-b}$ is replaced by the coercive lower weight $\kappa(x)$, and the exponent governing the large-scale estimates is $b_\infty$.
We begin with the weak formulation used in the proof.

\begin{defi}[Weak solution]\label{def:weak-solution}
Let $T>0$ and $u_0\in L^2$.  We say that $u$ is a weak solution to
\eqref{11.1} on $[0,T)$ with initial data $u_0$ if
\[
 u\in C([0,T);L^2)\cap L^{q_0}_{\rm loc}((0,T);L^{r_0,2}),
 \qquad
 u(0)=u_0,
\]
\[
 K F(u)\in L^1_{\rm loc}((0,T)\times\R^N),
\]
and
\begin{align*}
&\int_0^T\int_{\R^N}u(t,x)
\left(-i\partial_t\psi(t,x)+\frac12\Delta\psi(t,x)\right)\,dx\,dt \\
={}&
i\int_{\R^N}u_0(x)\psi(0,x)\,dx
+
\int_0^T\int_{\R^N}K(x)F(u(t,x))\psi(t,x)\,dx\,dt
\end{align*}
for every $\psi\in C_0^\infty((-\infty,T)\times\R^N)$.
\end{defi}

Every solution in the sense of Definition \ref{def:solution} satisfies this
weak formulation on every finite time interval.  This follows from the
standard density argument based on Strichartz estimates; see, for example,
\cite{ikeda3}.  In the proof of Theorem \ref{but} below, we use only this
weak formulation, the local integrability of $K F(u)$, and the coercive
lower bound in \eqref{A2}.  The regularity assumption in \eqref{A1} is needed
only to connect the weak formulation with the solution class used elsewhere
in the paper.

We define a cut-off function $\eta$
satisfying
\[
      \eta \in C^{\infty}(\R), \quad
      \mathbbm{1}_{(- \infty, 1/2]} \le \eta(s) \le \mathbbm{1}_{(-\infty,1]}.
\]
For $R>0$, set
\[
\psi_R(t,x)
=
\left\{\eta\!\left(\frac{|x|^2+t}{R^2}\right)\right\}^{2p'},
\qquad
t\ge0,\quad x\in\R^N.
\]
This type of cut-off function was firstly introduced in \cite{MP01} (cf. \cite{ikedaSobajima}).  We shall use the following elementary estimates.

\begin{lem}[Cut-off estimates]\label{lem:cutoff}
Let $R>0$ and $p>1$.  Then
\[
  |\partial_t\psi_R(t,x)|
  \lesssim
  R^{-2}\psi_R(t,x)^{1-\frac1{2p'}},
  \qquad
  |\Delta\psi_R(t,x)|
  \lesssim
  R^{-2}\psi_R(t,x)^{1-\frac1{p'}},
\]
for all $t\ge0$ and $x\in\R^N$.
\end{lem}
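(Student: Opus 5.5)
The plan is a direct chain-rule computation. Write $s=(|x|^2+t)/R^2$, so that $\psi_R=\eta(s)^{2p'}$. Since $\eta$ takes values in $[0,1]$ and is constant (equal to $1$ for $s\le1/2$, equal to $0$ for $s\ge1$) outside the strip $1/2\le s\le 1$, all derivatives of $\psi_R$ are supported where $|x|^2+t\le R^2$. For $t\ge0$ this forces $|x|\le R$, which will be used to absorb factors of $|x|$.

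\emph{Time derivative.} We have
\[
\partial_t\psi_R=2p'\,\eta(s)^{2p'-1}\eta'(s)\,R^{-2}.
\]
Using $|\eta'|\le C$ and $\eta(s)^{2p'-1}=\psi_R^{(2p'-1)/(2p')}=\psi_R^{1-\frac1{2p'}}$, this gives the first bound directly.

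\emph{Laplacian.} First compute
\[
\nabla\psi_R=2p'\eta^{2p'-1}\eta'(s)\,\frac{2x}{R^2}.
\]
Differentiating once more gives
\[
\Delta\psi_R=2p'\eta^{2p'-1}\eta'\frac{2N}{R^2}+2p'\Bigl[(2p'-1)\eta^{2p'-2}(\eta')^2+\eta^{2p'-1}\eta''\Bigr]\frac{4|x|^2}{R^4}.
\]
On the support we have $|x|^2/R^4\le R^{-2}$, and $0\le\eta\le1$. The lowest power of $\eta$ appearing is $\eta^{2p'-2}$, so every term is bounded by $CR^{-2}\eta^{2p'-2}$. Finally,
\[
\eta^{2p'-2}=\psi_R^{(2p'-2)/(2p')}=\psi_R^{1-\frac1{p'}},
\]
and $2p'-2>0$ since $p'>1$. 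This gives the second bound.

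The only mildly delicate point is that the exponent $2p'$ is needed so that $\eta^{2p'-2}$ remains a nonnegative power of $\psi_R$. A second point is that the restriction $t\ge0$ is what lets us use $|x|\le R$ on the support. Both have been checked above, so no real obstacle remains; the constants depend only on $N$, $p$, and $\|\eta'\|_\infty$, $\|\eta''\|_\infty$.
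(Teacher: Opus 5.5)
Your proposal is correct. It is exactly the direct chain-rule computation the paper leaves implicit (the lemma is stated there without proof as an ``elementary estimate''): $t\ge0$ gives $|x|\le R$ on the support of $\eta'(s),\eta''(s)$, and the identities $\eta^{2p'-1}=\psi_R^{1-\frac1{2p'}}$ and $\eta^{2p'-2}=\psi_R^{1-\frac1{p'}}$, together with $0\le\eta\le1$, give both bounds. The one detail you pass over, the chain rule for the non-integer power $\eta^{2p'}$ at zeros of $\eta$, is harmless: since $\eta\ge0$ is smooth, $\eta'$ vanishes at those points, and $2p'>2$ makes $\eta^{2p'}$ of class $C^2$, with both sides of each estimate equal to zero there.
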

To justify this choice of test function, fix $T>R^2$ and choose
$\chi\in C_0^\infty(( -\infty,T))$ such that $\chi\equiv1$ on a
neighborhood of $[0,R^2]$.  Set
\[
 \widetilde{\psi}_R(t,x)
 =
 \chi(t)
 \left\{
   \eta \left(\frac{|x|^2+t}{R^2}\right)
 \right\}^{2p'},
 \qquad
 (t,x)\in(-\infty,T)\times\R^N.
\]
Then
$\widetilde{\psi}_R\in C_0^\infty(( -\infty,T)\times\R^N)$ and
$\widetilde{\psi}_R=\psi_R$ on $0\le t\le R^2$.  Since
$\psi_R=0$ for $t\ge R^2$, applying the weak formulation with
$\widetilde{\psi}_R$
gives
\begin{align}
\begin{aligned}
&\int_0^{R^2}\int_{\R^N} u(t,x)
\left(-i\partial_t \widetilde{\psi}_R(t,x)+\frac12\Delta \widetilde{\psi}_R(t,x)\right)\,dx\,dt \\
&=
i\int_{\R^N}u_0(x) \widetilde{\psi}_R(0,x)\,dx
+
\int_0^{R^2}\int_{\R^N}K(x)F(u(t,x)) \widetilde{\psi}_R(t,x)\,dx\,dt.
\end{aligned}
\label{eq:test-identity}
\end{align}
for $R>1$.
In what follows, we write $\psi_R$ for $\widetilde{\psi}_R$ for simplicity.
We set
\[
I_\kappa(R)
=
\int_0^{R^2}\int_{\R^N}
\kappa(x)|u(t,x)|^p\psi_R(t,x)\,dx\,dt.
\]

We now use the lower bound \eqref{Klower}.

\begin{lem}\label{lem:test-estimate-kappa}
Let $u$ be a weak solution to \eqref{11.1} on $[0,\infty)$.
Assume \eqref{Klower}.  Then
\begin{align*}
\left|
\int_0^{R^2}\int_{\R^N}u(t,x)
\left(-i\partial_t\psi_R(t,x)+\frac12\Delta\psi_R(t,x)\right)\,dx\,dt
\right|
 \lesssim
R^{N-\frac{N+2-b_\infty}{p}}I_\kappa(R)^{1/p}
\end{align*}
for every $R>1$.
\end{lem}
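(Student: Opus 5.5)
The plan is to apply H\"older's inequality with the weight $\kappa$ inserted and removed, using the cut-off estimates of Lemma~\ref{lem:cutoff}. The key point is that the derivatives of $\psi_R$ are supported only where $R^2/2\le |x|^2+t\le R^2$. In particular, they are supported in the region $\{0\le t\le R^2,\ |x|\le R\}$.

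First, Lemma~\ref{lem:cutoff} and $1-\frac1{p'}\le 1-\frac1{2p'}$, together with $0\le\psi_R\le1$, give
\[
\left|-i\partial_t\psi_R+\tfrac12\Delta\psi_R\right|\lesssim R^{-2}\psi_R^{1/p}\,\mathbbm{1}_{D_R}(t,x),\qquad D_R:=\{0\le t\le R^2,\ |x|\le R\}.
\]
Here I use $1-\frac1{p'}=\frac1p$.

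Second, I write $|u|\psi_R^{1/p}=\kappa^{1/p}|u|\psi_R^{1/p}\cdot\kappa^{-1/p}$ and apply H\"older in $(t,x)$ with exponents $p,p'$. This gives
\[
\text{LHS}\lesssim R^{-2}I_\kappa(R)^{1/p}\left(\int_0^{R^2}\int_{|x|\le R}\kappa(x)^{-\frac{p'}{p}}\,dx\,dt\right)^{1/p'}.
\]

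Third, \eqref{Klower} gives $\kappa(x)^{-1}\lesssim |x|^{b_0}\langle x\rangle^{b_\infty-b_0}$. On $|x|\le R$ with $R>1$, this is $\lesssim \langle x\rangle^{b_\infty}\lesssim R^{b_\infty}$, because $|x|^{b_0}\le\langle x\rangle^{b_0}$ and $b_\infty\ge b_0\ge0$. Hence the integral is bounded by
\[
R^{2}\cdot R^{N}\cdot R^{b_\infty p'/p}.
\]
Its $1/p'$ power is therefore $R^{(N+2)/p'+b_\infty/p}$.

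Finally, I collect the exponents:
\[
-2+(N+2)\left(1-\tfrac1p\right)+\tfrac{b_\infty}{p}=N-\tfrac{N+2-b_\infty}{p},
\]
which is the stated power of $R$.

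The only delicate points are two. One is ensuring that the derivative bound carries the power $\psi_R^{1/p}$ exactly, so that the H\"older split yields $I_\kappa$. The other is checking that the weight bound uses $b_\infty$ rather than $b_0$ on the ball, which relies on $R>1$ and $b_0\le b_\infty$. Local integrability of $\kappa^{-p'/p}$ near the origin is automatic, since the weight is bounded there. The replacement of $\psi_R$ by $\widetilde\psi_R$ is harmless because they agree on $[0,R^2]$.
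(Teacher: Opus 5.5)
Your proposal is correct and follows essentially the same route as the paper: the pointwise bound $\left|-i\partial_t\psi_R+\frac12\Delta\psi_R\right|\lesssim R^{-2}\psi_R^{1/p}$ from Lemma~\ref{lem:cutoff}, H\"older with the weight $\kappa$ split off (note $\kappa^{-p'/p}=\kappa^{-1/(p-1)}$), and the estimate $\int_0^{R^2}\int_{|x|\le R}\kappa^{-1/(p-1)}\,dx\,dt\lesssim R^{N+2+b_\infty/(p-1)}$ from \eqref{Klower}. Your pointwise bound $\kappa^{-1}\lesssim\langle x\rangle^{b_\infty}\lesssim R^{b_\infty}$ on the ball is just a slightly more explicit version of the paper's integral bound.
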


\begin{proof}
By Lemma \ref{lem:cutoff} and $0\le\psi_R\le1$,
\[
\left|
-i\partial_t\psi_R+\frac12\Delta\psi_R
\right|
\lesssim
R^{-2}\psi_R^{1/p}.
\]
Hence, by H\"older's inequality,
\begin{align*}
&\left|
\int_0^{R^2}\int_{\R^N}u(t,x)
\left(-i\partial_t\psi_R(t,x)+\frac12\Delta\psi_R(t,x)\right)\,dx\,dt
\right|\\
&\quad\lesssim
R^{-2}
\int_{\{|x|^2+t\le R^2\}}|u(t,x)|\psi_R(t,x)^{1/p}\,dx\,dt\\
&\quad\le
R^{-2}
\left(
\int_{\{|x|^2+t\le R^2\}}\kappa(x)^{-\frac1{p-1}}\,dx\,dt
\right)^{1/p'}
\left(
\int_{\{|x|^2+t\le R^2\}}\kappa(x)|u(t,x)|^p\psi_R(t,x)\,dx\,dt
\right)^{1/p}\\
&\quad\le
R^{-2}
\left(
\int_{\{|x|^2+t\le R^2\}}\kappa(x)^{-\frac1{p-1}}\,dx\,dt
\right)^{1/p'}
I_\kappa(R)^{1/p}.
\end{align*}
By \eqref{Klower},
\[
\kappa(x)^{-\frac1{p-1}}
\lesssim
|x|^{\frac{b_0}{p-1}}
\langle x\rangle^{\frac{b_\infty-b_0}{p-1}}.
\]
Therefore
\begin{align*}
\int_{\{|x|^2+t\le R^2\}}\kappa(x)^{-\frac1{p-1}}\,dx\,dt
&\lesssim
\int_0^{R^2}\int_{|x|\le R}
|x|^{\frac{b_0}{p-1}}
\langle x\rangle^{\frac{b_\infty-b_0}{p-1}}\,dx\,dt\\
&\lesssim
R^{N+2+\frac{b_\infty}{p-1}}.
\end{align*}
It follows that
\[
R^{-2}
\left(
\int_{\{|x|^2+t\le R^2\}}\kappa(x)^{-\frac1{p-1}}\,dx\,dt
\right)^{1/p'}
\lesssim
R^{N-\frac{N+2-b_\infty}{p}},
\]
which proves the desired estimate.
\end{proof}

We next estimate the term involving the initial data.
In test-function arguments, this term is often handled by imposing a slowly decaying condition on the initial data.
Here we instead use the weighted assumption $u_0\in\mathcal{F}H^\alpha$, which is natural in the scattering setting.
Under the weighted condition, this term turns out to be harmless in our argument.

\begin{lem}\label{lem:initial-data-term}
Let $\alpha>0$.  Then, for every $R>2$,
\[
\left|
\int_{\R^N}u_0(x)\psi_R(0,x)\,dx
\right|
\lesssim
\|u_0\|_{\mathcal{F}H^\alpha}
\begin{cases}
1, & \alpha > N/2, \\
\log R, & \alpha = N/2,\\
R^{\frac{N-2\alpha}{2}}, & \alpha< N/2.
\end{cases}
\]
\end{lem}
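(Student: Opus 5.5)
We use only that $\psi_R(0,\cdot)$ is supported in the ball $\{|x|\le R\}$ and satisfies $0\le\psi_R(0,x)\le1$. Indeed, $\psi_R(0,x)=\eta(|x|^2/R^2)^{2p'}$ and $\eta(s)\le\mathbbm{1}_{(-\infty,1]}(s)$, so $\psi_R(0,x)=0$ for $|x|>R$. (The cut-off $\chi$ in $\widetilde\psi_R$ equals $1$ at $t=0$, so it does not matter.)

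We then insert the weight $\langle x\rangle^{\alpha}$ and apply the Cauchy--Schwarz inequality:
\[
\left|\int_{\R^N}u_0\psi_R(0,x)\,dx\right|
\le\int_{|x|\le R}\langle x\rangle^{\alpha}|u_0|\,\langle x\rangle^{-\alpha}\,dx
\le\|u_0\|_{\mathcal{F}H^\alpha}\left(\int_{|x|\le R}\langle x\rangle^{-2\alpha}\,dx\right)^{1/2}.
\]

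It remains to bound the integral by passing to polar coordinates:
\[
\int_{|x|\le R}\langle x\rangle^{-2\alpha}\,dx\lesssim 1+\int_1^R r^{N-1-2\alpha}\,dr .
\]
\begin{itemize}
\item If $2\alpha>N$, the integral converges uniformly in $R$, so the bound is $O(1)$.
\item If $2\alpha=N$, the integral is $\log R$, which gives the factor $(\log R)^{1/2}\lesssim\log R$ for $R>2$.
\item If $2\alpha<N$, the integral is $\lesssim R^{N-2\alpha}$, and its square root gives the factor $R^{(N-2\alpha)/2}$.
\end{itemize}
The restriction $R>2$ only ensures $\log R\ge\log 2>0$, so constants can be absorbed in the borderline case.

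There is no real obstacle. The only points to check are the support and size properties of $\psi_R(0,\cdot)$ inherited from $\eta$. In the borderline case $\alpha=N/2$ we actually obtain the sharper bound $(\log R)^{1/2}$, which is stronger than the stated $\log R$.
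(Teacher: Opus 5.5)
Your proposal is correct and follows essentially the same route as the paper: both use that $\psi_R(0,\cdot)$ is supported in $B(0,R)$ with $0\le\psi_R\le1$, apply Cauchy--Schwarz against the weight $\langle x\rangle^{\alpha}$, and bound $\int_{B(0,R)}\langle x\rangle^{-2\alpha}\,dx$ in the three regimes. Your remark that the borderline case actually gives $(\log R)^{1/2}\lesssim\log R$ for $R>2$ matches the paper's closing observation.
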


\begin{proof}
Since $\operatorname{supp}\psi_R(0,\cdot)\subset B(0,R)$, H\"older's inequality gives
\begin{align*}
\left|
\int_{\R^N}u_0(x)\psi_R(0,x)\,dx
\right|
&\le
\int_{B(0,R)}|u_0(x)|\,dx\\
&\le
\left(
\int_{B(0,R)}
|u_0(x)|^2\langle x\rangle^{2\alpha}\,dx
\right)^{1/2}
\left(
\int_{B(0,R)}
\langle x\rangle^{-2\alpha}\,dx
\right)^{1/2}.
\end{align*}
The last factor is bounded by a constant if $\alpha>N/2$, by $C(\log R)^{1/2}$ if $\alpha=N/2$, and by $CR^{(N-2\alpha)/2}$ if $\alpha<N/2$.  Since $(\log R)^{1/2} \lesssim \log R$ for $R>2$, the desired assertion follows.
\end{proof}

The following lemma extracts a lower bound for the nonlinear spacetime integral from the existence of a nonzero scattering state.

\begin{lem}\label{lem:lower-Ikappa}
Let $u$ be a weak solution to \eqref{11.1} on $[0,\infty)$.
Let $u_+\in L^2$ if $1<p\le2$, and let
$u_+\in L^2\cap L^{\frac{p}{p-1}}$ if $p>2$.  Assume that
$u$ satisfies the corresponding asymptotic condition in Theorem \ref{but},
namely condition (i) if $1<p\le2$ and condition (ii) if $p>2$,
with asymptotic state $u_+$.  If $u_+\not\equiv0$, then
\begin{equation}\label{lowerIkappa}
R^{N+1-b_\infty-\frac N2p}
\lesssim
I_\kappa(R)
\end{equation}
holds for all sufficiently large $R$.
\end{lem}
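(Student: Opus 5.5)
Our plan is to bound $I_\kappa(R)$ from below on a region where $\psi_R\equiv1$ and where the free profile $U(t)u_+$ has its natural size. Choose the spacetime window $D_R:=\{(t,x): R^2/8\le t\le R^2/4,\ |x|\le R/2\}$. On it $|x|^2+t\le R^2/2$, so $\psi_R=1$. By \eqref{Klower}, for $1\le|x|\le R/2$ we have $\kappa(x)\gtrsim R^{-b_\infty}$; the ball $|x|\le1$ can simply be discarded. This gives
\[
I_\kappa(R)\gtrsim R^{-b_\infty}\int_{R^2/8}^{R^2/4}\int_{|x|\le R/2}|u(t,x)|^p\,dx\,dt .
\]
It then remains to show that $\int_{|x|\le R/2}|u(t)|^p\,dx\gtrsim R^{N-\frac N2 p}$ for $t\sim R^2$. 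Integrating over a time interval of length $\sim R^2$ would then give $R^{N+2-b_\infty-\frac N2p}$, which is even better than \eqref{lowerIkappa}. The weaker exponent in \eqref{lowerIkappa} leaves room for losses, for example from applying H\"older in time, so the exact power is not critical.

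For the free part we use the factorization $U(t)u_+=M(t)D(t)\mathcal F M(t)u_+$, where $M(t)=e^{i|x|^2/2t}$ and $D(t)$ is the $L^2$ dilation $t^{-N/2}\phi(x/t)$ up to constants. Since $\|(M(t)-1)u_+\|_{L^2}\to0$, and in $L^{p'}$ as well when $p>2$, we get $\|U(t)u_+-M D\widehat{u_+}(\cdot/t)\|$ small in the relevant norms. Choose $\rho>0$ with $\int_{|\xi|\le\rho}|\widehat{u_+}|^2\,d\xi>0$, which is possible since $u_+\not\equiv0$. On $|x|\le\rho t$ the main term has modulus $t^{-N/2}|\widehat{u_+}(x/t)|$. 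Since $t\le R^2/4$, we have $\rho t\lesssim R\cdot(\rho R)$, so the ball $|x|\le R/2$ does not contain $|x|\le\rho t$ in general. We therefore switch to a parabolic window: take $t\in[\epsilon R^2/2,\epsilon R^2]$ with $\epsilon$ small, and use $|x|\le R/2$, which contains $|x|\le\rho' t$ with $\rho'\sim 1/(\epsilon R)$.

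This scale mismatch must be handled carefully, and the cleanest route is probably a different one: work on the time window $t\sim R^2$ but bound $u$ from below on $|x|\le R/2$ through duality instead of pointwise. Pairing $u(t)$ with a fixed $L^2$ function $g_t=U(t)\chi$, supported essentially in $|x|\lesssim t^{1/2}\sim R$, gives a nonzero constant $c=\langle u_+,\chi\rangle$. Then $|c|\le\|u(t)\|_{L^p(|x|\le R/2)}\|g_t\|_{L^{p'}(|x|\le R/2)}+o(1)$, and with $\|g_t\|_{L^{p'}}\lesssim R^{N(1/p'-1/2)}$ this yields $\|u(t)\|_{L^p(B_{R/2})}\gtrsim R^{-N(\frac12-\frac1p)}$, i.e. $\int_{B_{R/2}}|u|^p\gtrsim R^{N-\frac N2p}$. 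To make $g_t$ localized we take $\chi$ with $\widehat\chi$ compactly supported in $|\xi|\lesssim1/R$, scaled so that $U(t)\chi$ lives at scale $R$ for $t\lesssim R^2$. The tail outside $B_{R/2}$ is handled by the Schwartz decay of $U(t)\chi$ at that scale.

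We expect the \textbf{main obstacle} to be the error term $\langle u(t)-U(t)u_+,g_t\rangle$. In case (i) it is bounded by $\|u(t)-U(t)u_+\|_{L^2}\to0$ uniformly in $t\ge R^2/8$. The quantity $\langle U(t)u_+,g_t\rangle$, after scaling, still needs to be bounded below uniformly in $R$, and this requires choosing $\chi=\chi_R$ compatibly with $\widehat{u_+}$ near $0$. If that is delicate, we use the Fraunhofer lower bound $\int_{|x|\le \rho t}|U(t)u_+|^2\,dx\ge c>0$ at $t\sim R^2/\rho$ and adjust the window accordingly, accepting a loss of one power of $R$. This loss is consistent with the exponent $N+1-b_\infty-\frac N2p$. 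In case (ii) the error is controlled in $L^{q_*}_tL^p_x$ after averaging in $t$ with H\"older, and the factor $t^{1/q_*}$ in the hypothesis exactly compensates the length $R^2$ of the time window.
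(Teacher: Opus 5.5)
There is a genuine gap. The window $\{R^2/8\le t\le R^2/4,\ |x|\le R/2\}$ cannot produce the lower bound, and your target $\int_{|x|\le R/2}|u(t)|^p\,dx\gtrsim R^{N-\frac N2p}$ for $t\sim R^2$ is false.

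By the factorization, the part of the free wave with frequency $\xi$ sits at $x\approx t\xi$. So on $|x|\le R/2$ with $t\ge R^2/8$, the profile only samples $\widehat{u_+}$ on the shrinking ball $B(0,4/R)$. For $p\le2$, H\"older gives, uniformly for $t$ in your window,
\[
\int_{|x|\le R/2}|M(t)D(t)\widehat{u_+}|^p\,dx
=t^{N-\frac N2p}\int_{|\xi|\le\frac{R}{2t}}|\widehat{u_+}(\xi)|^p\,d\xi
\lesssim R^{N-\frac N2p}\,\|\widehat{u_+}\|_{L^2(B(0,4/R))}^p
=o\bigl(R^{N-\frac N2p}\bigr).
\]
This vanishes identically if $\widehat{u_+}=0$ near the origin. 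Meanwhile, the available error bounds (from (i) and from $\|M(t)u_+-u_+\|_{L^2}\to0$) are only of size $\varepsilon^pR^{N-\frac N2p}$, so no lower bound survives.

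The duality variant fails for the same reason. Take $\operatorname{supp}\widehat\chi\subset B(0,C/R)$ and $\|\chi\|_{L^2}\lesssim1$, which your bound on $\|g_t\|_{L^{p'}}$ presupposes. Then $|\langle u_+,\chi\rangle|\le\|\widehat{u_+}\|_{L^2(B(0,C/R))}\to0$, so $c$ is not a fixed nonzero constant. Renormalizing $\chi$ just moves this decay into $\|g_t\|_{L^{p'}}$ and into the error term. Your window $t\sim\epsilon R^2$ with $\rho'\sim1/(\epsilon R)$ has the same defect unless $\epsilon\sim1/R$.

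Nor is the exponent in \eqref{lowerIkappa} slack. Suppose $I_\kappa(R)\gtrsim R^{N+2-b_\infty-\frac N2p}$ held. Then the upper bound $I_\kappa(R)\lesssim R^{N-\frac{2-b_\infty}{p-1}}$ from the proof of Theorem \ref{but} would exclude nonzero scattering states for all $1<p<p_0(b_\infty)$. That is incompatible with Corollary \ref{cor:FcalH1-small-scattering}; take $N=3$, $K=|x|^{-b}$ with $0<b<\frac12$, $F(z)=|z|^p$, and $p_{\mathrm{st}}(b)<p\le2$.

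The missing idea is to match the region to the propagation speed of the profile. Frequencies in $B(0,\rho_{\ast})$ move at speed at most $\rho_{\ast}$. Staying inside $\{\psi_R=1\}\supset\{|x|^2+t\le R^2/2\}$ therefore forces $t\lesssim R/\rho_{\ast}$, not $t\sim R^2$. The paper takes
\[
D_{\rho_{\ast},R}=\{R/(4\rho_{\ast})\le t\le R/(2\rho_{\ast}),\ |x|\le\rho_{\ast}t\},
\]
with $\rho_{\ast}$ chosen so that $\|\widehat{u_+}\|_{L^p(B(0,\rho_{\ast}))}\ne0$. On this region:
\begin{enumerate}
\item $\psi_R\equiv1$ and $\kappa\gtrsim R^{-b_\infty}$.
\item The explicit modulus $t^{-N/2}|\widehat{u_+}(x/t)|$ of $M(t)D(t)\widehat{u_+}$ gives exactly $CR^{N+1-\frac N2p}\|\widehat{u_+}\|_{L^p(B(0,\rho_{\ast}))}^p$.
\item The error $U(t)u_+-M(t)D(t)\widehat{u_+}=M(t)D(t)\mathcal F\bigl(M(t)u_+-u_+\bigr)$ contributes $\lesssim\varepsilon^pR^{N+1-\frac N2p}$. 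This follows from H\"older on $B(0,\rho_{\ast})$ and Plancherel, or from Hausdorff--Young using $u_+\in L^{p'}$ when $p>2$.
\item The error $u-U(t)u_+$ obeys the same bound. In case (i) this is H\"older on the ball of volume $\sim t^N$. In case (ii) it is H\"older over the time window of length $\sim R$, where the weight $t^{1/q_{\ast}}$ makes the error exactly $\varepsilon^p$ times the size of the main term.
\end{enumerate}
The ``$+1$'' in the exponent is the length of this window, i.e.\ the loss of one power of $R$ you anticipated in your fallback.

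As written, however, that fallback ($t\sim R^2/\rho$, $|x|\le\rho t$) puts the region at $|x|\sim R^2$, outside $\operatorname{supp}\psi_R$. Also, an $L^2$ lower bound on $U(t)u_+$ does not give the needed $L^p$ lower bound when $p<2$; you must use the explicit profile.
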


\begin{proof}
Choose $\rho_{\ast}>1$ such that
\[
\|\widehat{u_+}\|_{L^p(B(0,\rho_{\ast}))}\ne0.
\]
This is possible under the assumptions on $u_+$.  Indeed, if $1<p\le2$, then $\widehat{u_+}\in L^2\subset L^p_{\rm loc}$; if $p>2$, then the Hausdorff--Young inequality gives $\widehat{u_+}\in L^p$.
Since the Fourier transform is injective on $L^2$, $\widehat{u_+}\not\equiv0$. Thus its $L^p$-norm on some ball is nonzero.
For $R>2$, define
\[
D_{\rho_{\ast},R}
=
\left\{
(t,x)\in(0,R^2)\times\R^N:
\frac{R}{4\rho_{\ast}}\le t\le\frac{R}{2\rho_{\ast}},
\quad
|x|\le \rho_{\ast} t
\right\}.
\]
Then $\psi_R\equiv1$ on $D_{\rho_{\ast},R}$ for all sufficiently large $R$.  Moreover, \eqref{Klower} implies
\[
\kappa(x)\gtrsim R^{-b_\infty}
\qquad
\text{on }D_{\rho_{\ast},R}.
\]
We use the standard factorization of the free Schr\"odinger group
\[
U(t)=M(t)D(t)\mathcal{F}M(t),
\]
where
\[
[M(t)f](x)=e^{i|x|^2/(2t)}f(x),
\qquad
[D(t)f](x)=t^{-N/2}f(x/t).
\]
Set
\[
u_{p}(t):=M(t)D(t)\mathcal{F}u_+,
\]
which is the asymptotics of $U(t)u_{+}$.
Then
\[
u
=
u_{p}(t)
+\bigl(U(t)u_+-u_{p}(t)\bigr)
+\bigl(u-U(t)u_+\bigr).
\]
By the triangle inequality in $L^p(D_{\rho_{\ast},R})$,
\begin{align}
\begin{aligned}
I_\kappa(R)^{1/p}
&\ge
\left(
\iint_{D_{\rho_{\ast},R}}\kappa(x)|u(t,x)|^p\,dx\,dt
\right)^{1/p}\\
&\gtrsim
R^{-b_\infty/p}
\Bigg[
\left(
\iint_{D_{\rho_{\ast},R}}|u_{p}(t,x)|^p\,dx\,dt
\right)^{1/p}\\
&\hspace{6em}
-
\left(
\iint_{D_{\rho_{\ast},R}}|U(t)u_+-u_{p}(t)|^p\,dx\,dt
\right)^{1/p}\\
&\hspace{6em}
-
\left(
\iint_{D_{\rho_{\ast},R}}|u-U(t)u_+|^p\,dx\,dt
\right)^{1/p}
\Bigg].
\end{aligned}
\label{eq:triangle-lower}
\end{align}

We estimate the last term in \eqref{eq:triangle-lower}.
Let us first treat the case $1<p\le2$.
Fix $\varepsilon>0$, to be chosen sufficiently small.
By the asymptotic condition in Theorem \ref{but} {\rm (i)}, there exists
$t_*(\varepsilon)>0$ such that
\[
 \|u(t)-U(t)u_+\|_{L^2}<\varepsilon
\]
for all $t\ge t_*(\varepsilon)$.  Hence, by H\"older's inequality,
\begin{align}
\begin{aligned}
\iint_{D_{\rho_{\ast},R}}|u(t)-U(t)u_+|^p\,dx\,dt
&\le
\int_{R/(4\rho_{\ast})}^{R/(2\rho_{\ast})}
\int_{B(0,\rho_{\ast}t)}
|u(t)-U(t)u_+|^p\,dx\,dt \\
&\lesssim
\sup_{t\ge R/(4\rho_{\ast})}
\|u(t)-U(t)u_+\|_{L^2}^p
R^{N+1-\frac N2p} \\
&\le
\varepsilon^p R^{N+1-\frac N2p}
\end{aligned}
\label{eq:error-nonlinear-p-leq-2}
\end{align}
for all $R\ge4\rho_{\ast}t_*(\varepsilon)$.

Next, by the factorization of $U(t)$,
\begin{align*}
\iint_{D_{\rho_{\ast},R}}|U(t)u_+-u_p(t)|^p\,dx\,dt
&=
\iint_{D_{\rho_{\ast},R}}
\left|
D(t)\mathcal{F}\bigl(M(t)u_+-u_+\bigr)
\right|^p\,dx\,dt\\
&=
\int_{R/(4\rho_{\ast})}^{R/(2\rho_{\ast})}
t^{N-\frac N2p}
\int_{B(0,\rho_{\ast})}
\left|
\mathcal{F}\bigl(M(t)u_+-u_+\bigr)
\right|^p\,dx\,dt.
\end{align*}
Since $1<p\le2$, H\"older's inequality on $B(0,\rho_{\ast})$ and Plancherel's theorem imply
\[
\iint_{D_{\rho_{\ast},R}}|U(t)u_+-u_p(t)|^p\,dx\,dt
\lesssim
R^{N+1-\frac N2p}
\left(
\sup_{t\ge R/(4\rho_{\ast})}
\|M(t)u_+-u_+\|_{L^2}
\right)^p.
\]
Since $u_+\in L^2$ and $|M(t)|=1$, Lebesgue's dominated convergence theorem gives
\[
 M(t)u_+\to u_+
 \quad\text{in }L^2
 \quad\text{as }t\to\infty.
\]
Therefore, for all sufficiently large $R$,
\begin{equation}\label{eq:linear-error}
\iint_{D_{\rho_{\ast},R}}|U(t)u_+-u_p(t)|^p\,dx\,dt
\lesssim
\varepsilon^p R^{N+1-\frac N2p}.
\end{equation}

We now consider the case $p>2$.  Set
\[
q=\frac{4p}{N(p-2)},
\qquad
\frac2q+\frac Np=\frac N2.
\]
Fix $\varepsilon>0$, to be chosen sufficiently small.
By the assumption in Theorem \ref{but} (ii), there exists $t_*(\varepsilon)>0$ such that
\[
T^{1/q}
\|u-U(\cdot)u_+\|_{L^q(T,\infty;L^p)}
<\varepsilon
\]
for all $T\ge t_*(\varepsilon)$.  Therefore, for $R\ge4\rho_{\ast}t_*(\varepsilon)$,
\begin{align}
\begin{aligned}
\iint_{D_{\rho_{\ast},R}}|u(t)-U(t)u_+|^p\,dx\,dt
&\le
\int_{R/(4\rho_{\ast})}^{R/(2\rho_{\ast})}
\|u(t)-U(t)u_+\|_{L^p}^p\,dt\\
&\le
\|u-U(\cdot)u_+\|_{L^q(R/(4\rho_{\ast}),\infty;L^p)}^p
R^{1-\frac pq}\\
&\lesssim
\varepsilon^p R^{-\frac pq}R^{1-\frac pq}
=
\varepsilon^p R^{N+1-\frac N2p}.
\end{aligned}
\label{eq:error-nonlinear-p-greater-2}
\end{align}
For the second term in \eqref{eq:triangle-lower}, the Hausdorff--Young inequality gives
\begin{align*}
\iint_{D_{\rho_{\ast},R}}|U(t)u_+-u_{p}(t)|^p\,dx\,dt
&\lesssim
R^{N+1-\frac N2p}
\left(
\sup_{t\ge R/(4\rho_{\ast})}
\|M(t)u_+-u_+\|_{L^{p'}}
\right)^p.
\end{align*}
Since $u_+\in L^{p'}$, we have $M(t)u_+\to u_+$ in $L^{p'}$ as $t\to\infty$.  Hence \eqref{eq:linear-error} holds for all sufficiently large $R$ also in this case.

Finally, the leading term in \eqref{eq:triangle-lower} is explicitly computed as
\begin{align}
\begin{aligned}
\iint_{D_{\rho_{\ast},R}}|u_{p}(t,x)|^p\,dx\,dt
&=
\int_{R/(4\rho_{\ast})}^{R/(2\rho_{\ast})}
t^{N-\frac N2p}
\int_{B(0,\rho_{\ast})}
|\widehat{u_+}(\xi)|^p\,d\xi\,dt \\
&=
C R^{N+1-\frac N2p}
\|\widehat{u_+}\|_{L^p(B(0,\rho_{\ast}))}^p.
\end{aligned}
\label{eq:leading-profile}
\end{align}
Combining \eqref{eq:triangle-lower}, \eqref{eq:error-nonlinear-p-leq-2}, \eqref{eq:error-nonlinear-p-greater-2}, \eqref{eq:linear-error}, and \eqref{eq:leading-profile}, we obtain
\[
I_\kappa(R)^{1/p}
\gtrsim
R^{-b_\infty/p}
R^{\frac1p\left(N+1-\frac N2p\right)}.
\]
This proves \eqref{lowerIkappa}.
\end{proof}

\begin{proof}[Proof of Theorem \ref{but}]
Assume $u_+\not\equiv0$ and argue by contradiction.  Multiplying \eqref{eq:test-identity} by $e^{i\varphi}$ and taking real parts, we see from \eqref{KFpositive} and Lemma \ref{lem:test-estimate-kappa} that
\begin{align*}
c_0I_\kappa(R)
&\le
\operatorname{Re}
\left(
e^{i\varphi}
\int_0^{R^2}\int_{\R^N}
K(x)F(u(t,x))\psi_R(t,x)\,dx\,dt
\right)\\
&=
\operatorname{Re}
\left(
e^{i\varphi}
\int_0^{R^2}\int_{\R^N}
u(t,x)
\left(-i\partial_t\psi_R(t,x)+\frac12\Delta\psi_R(t,x)\right)\,dx\,dt
\right)\\
&\quad
-
\operatorname{Re}
\left(
ie^{i\varphi}
\int_{\R^N}u_0(x)\psi_R(0,x)\,dx
\right)\\
&\lesssim
\left|
\int_{\R^N}u_0(x)\psi_R(0,x)\,dx
\right|
+
R^{N-\frac{N+2-b_\infty}{p}}I_\kappa(R)^{1/p}.
\end{align*}
Young's inequality yields
\[
R^{N-\frac{N+2-b_\infty}{p}}I_\kappa(R)^{1/p}
\le
\varepsilon I_\kappa(R)
+
C_\varepsilon R^{N-\frac{2-b_\infty}{p-1}}
\]
for small $\varepsilon>0$.
Absorbing the first term on the right-hand side, we get
\[
I_\kappa(R)
\lesssim
\left|
\int_{\R^N}u_0(x)\psi_R(0,x)\,dx
\right|
+
R^{N-\frac{2-b_\infty}{p-1}}.
\]
By Lemma \ref{lem:initial-data-term},
\begin{equation}\label{eq:Ikappa-upper}
I_\kappa(R)
\lesssim
\begin{cases}
1+R^{N-\frac{2-b_\infty}{p-1}},
& \alpha> N/2,\\
\log R+R^{N-\frac{2-b_\infty}{p-1}},
& \alpha= N/2,\\
R^{\frac{N-2\alpha}{2}}+R^{N-\frac{2-b_\infty}{p-1}},
& \alpha< N/2.
\end{cases}
\end{equation}
Set
\[
 \gamma:=N-\frac{2-b_\infty}{p-1}.
\]
First consider the case $p>1+(2-b_\infty)/N$.  Then $\gamma>0$.  The assumption $(2-b_\infty)/(p-1) - N/2 \leq \alpha$
gives
\[
 \frac{N-2\alpha}{2}\le \gamma.
\]
Hence \eqref{eq:Ikappa-upper} implies
\[
 I_\kappa(R)\lesssim R^\gamma
\]
for all sufficiently large $R$.
Combining this with Lemma \ref{lem:lower-Ikappa}, we obtain
\[
 R^{N+1-b_\infty-\frac N2p}
 \lesssim
 R^\gamma.
\]
for all sufficiently large $R$.
This leads to
\[
 N+1-b_\infty-\frac N2p
 \le
 N-\frac{2-b_\infty}{p-1},
\]
which is equivalent to $p\ge p_{\mathrm{st}}(b_\infty)$.  This contradicts the assumption
$p<p_{\mathrm{st}}(b_\infty)$.

It remains to consider $1<p\le1+(2-b_\infty)/N$.  Then $\gamma\le0$, and the
condition on $\alpha$ implies $\alpha\ge N/2$.  Thus \eqref{eq:Ikappa-upper} gives
\[
 I_\kappa(R)\lesssim 1+\log R.
\]
On the other hand, Lemma \ref{lem:lower-Ikappa} gives
\[
 R^{N+1-b_\infty-\frac N2p}
 \lesssim
 I_\kappa(R).
\]
Since
\[
 N+1-b_\infty-\frac N2p
 \ge
 \frac{N-b_\infty}{2}
 >0,
\]
this is impossible as $R\to\infty$.  Therefore $u_+\equiv0$.
The assertion for the backward time direction follows by the same argument, after the change of variables $t\mapsto -t$.
\end{proof}

\appendix
\section{Proof of the Lorentz non-admissible Strichartz estimate}
\label{app:Lorentz-nonadmissible-Strichartz}

\begin{proof}[Proof of Proposition \ref{prop:nonadmissible-strichartz}]
We include the proof for completeness.  It combines the dispersive estimate with
O'Neil's convolution inequality in Lorentz spaces.
Set
\[
 \theta=N\left(\frac12-\frac1\rho\right).
\]
Since $(\sigma,\rho)$ is Lorentz acceptable, we have
\[
 0<\theta<1,
 \qquad
 0<\frac1\sigma<\theta.
\]
Hence $\widetilde\sigma$ defined by \eqref{eq:tilde-sigma-definition} satisfies $1<\widetilde\sigma<\infty$.
We first recall the Lorentz dispersive estimate
\begin{equation}\label{eq:Lorentz-dispersive-appendix}
 \|U(t)g\|_{L^{\rho,\nu_1}}
 \le
 C |t|^{-\theta}
 \|g\|_{L^{\rho',\nu_0}},
 \qquad t\ne0,
\end{equation}
for $1\le\nu_0\le\nu_1\le\infty$.  Indeed, the usual dispersive estimate
$L^1\to L^\infty$ and the $L^2$ unitarity of $U(t)$ give \eqref{eq:Lorentz-dispersive-appendix} with $\nu_0=\nu_1$ by real interpolation.
The case $\nu_0\le\nu_1$ follows from the embedding $L^{\rho,\nu_0}\hookrightarrow L^{\rho,\nu_1}$.

We identify $f$ with its zero extension outside $I$ and set
\[
 G(s)=\|f(s)\|_{L^{\rho',\nu_0}}.
\]
For $t\in I$, the Duhamel term is bounded by the full time convolution:
\[
\left\|
\int_{t_0}^{t}U(t-s)f(s)\,ds
\right\|_{L^{\rho,\nu_1}}
\le
C\int_{\R}|t-s|^{-\theta}G(s)\,ds.
\]
The time orientation is irrelevant for this estimate; if $t<t_0$,
we take the absolute value and integrate over the interval between $t$
and $t_0$.
Since $|\cdot|^{-\theta}\in L^{1/\theta,\infty}(\R)$,
O'Neil's convolution
inequality \cite{ONeil} and the embedding
$L^{\sigma,\widetilde\sigma'}(\R)\hookrightarrow L^\sigma(\R)$ for
$\widetilde\sigma'\le\sigma$ yield
\[
 \left\||\cdot|^{-\theta}*G\right\|_{L^\sigma(\R)}
 \lesssim \left\||\cdot|^{-\theta}*G\right\|_{L^{\sigma,\widetilde\sigma'}(\R)}
 \lesssim
 \||\cdot|^{-\theta}\|_{L^{1/\theta,\infty}(\R)}
 \|G\|_{L^{\widetilde\sigma'}(\R)}.
\]
Here the exponents satisfy
\[
 1+\frac1\sigma
 =
 \theta+\frac1{\widetilde\sigma'},
\]
which is equivalent to \eqref{eq:tilde-sigma-definition}.  Restricting this estimate to $I$ gives \eqref{eq:Lorentz-nonadmissible-Strichartz}.
\end{proof}

\section*{Use of AI tools}

The authors used GPT-5.5 (OpenAI) to assist in identifying candidate illustrative examples satisfying the assumptions, flagging possible gaps in selected proofs, and improving the English presentation.
All mathematical content was independently verified by the authors, who take full responsibility for the final manuscript.

\section*{Acknowledgments}
H.M. was supported by JSPS KAKENHI Grant Number 22K13941 and 26K00612.

\end{document}